%
% FSM für pseudoergodische Jacobi-Operatoren
%

\documentclass[11pt]{article}
\usepackage{amsfonts,amssymb,amsmath}
\usepackage{graphicx}

\usepackage{color} % load the color package
   \definecolor{cites}{rgb}{0.50 , 0.00 , 0.00}  % colour for citations
   \definecolor{urls} {rgb}{0.00 , 0.00 , 0.50}  % colour for URL's
   \definecolor{links}{rgb}{0.00 , 0.00 , 0.50}   % colour for links
\usepackage[
      %------------- Color Links -------------------------------
      colorlinks=true,   % Yes, we want coloured links!
      citecolor=cites,   % Set the citations colour,
      urlcolor=urls,     % and the URL's colour,
      linkcolor=links,   % and the colour for the links.
      %------------- Doc Info ----------------------------------
      pdftitle={Finite sections of random Jacobi operators},  % Put your title here
      pdfauthor={Marko Lindner, Steffen Roch}, % And your name here
%      pdfsubject={123},
%      pdfkeywords={123},
      %------------ Doc View ----------------------------------
      pdfpagemode=UseOutlines, % None, UseThumbs, UseOutlines, FullScreen
      pdfstartview=FitH,       % Fit, FitH, FitB
      bookmarksopen=false      % Bookmark tree already opened?
   ]{hyperref}

\parindent3ex
\parskip1ex

\topmargin-10mm
\textheight230mm
\oddsidemargin0mm
\evensidemargin0mm
\textwidth160mm

\newcommand{\cA}{{\mathcal A}}

\newcommand\eps\varepsilon
\newcommand\ph\varphi
\newcommand\spec{{\rm spec}\,}  % with space
\newcommand\specn{{\rm spec}}   % no space

\newcommand\spess{{\rm spec}_{\rm ess}\,}
\newcommand\spptp{{\rm spec}_{\rm pt}^p\,}
\newcommand\sppt{{\rm spec}_{\rm pt}^\infty\,}
\newcommand\opsp{\sigma^{\sf op}}
\newcommand\dist{{\rm dist}}

\newcommand\diag{{\rm diag}}
\newcommand\Diag{{\rm Diag}}
\newcommand\conv{{\rm conv}}

\newcommand\ri{{\rm i}}
\newcommand\ind{{\rm ind\,}}
\newcommand\wind{{\rm wind}}
\newcommand\im{{\rm im\,}}

\newcommand\C{{\mathbb C}}
\newcommand\R{{\mathbb R}}

\newcommand\T{{\mathbb T}}
\newcommand\Z{{\mathbb Z}}
\newcommand\N{{\mathbb N}}
\newcommand\I{{\mathbb I}}
\newcommand\M{{\mathcal M}}
\newcommand\D{{\mathbb D}}
\newcommand\ovD{{\overline{\mathbb D}}}

\newcommand\uu{{\mathbf u}}
\newcommand\vv{{\mathbf v}}
\newcommand\ww{{\mathbf w}}
\newcommand\PE{{\rm \Psi E}} % ... oder das?
\newcommand\PEL{\PE_{\rm L}}
\newcommand\PER{\PE_{\rm R}}

%% Theorem-Umgebungen
\newtheorem{theorem}{Theorem}[section]
\newtheorem{lemma}[theorem]{Lemma}
\newtheorem{corollary}[theorem]{Corollary}
\newtheorem{proposition}[theorem]{Proposition}

\newtheorem{algorithm}[theorem]{Algorithm}

\newenvironment{remark}
 {\par\noindent\refstepcounter{theorem}{\bf Remark \thetheorem}\ }
 {\raisebox{1mm}{\framebox{}}\pagebreak[2]}

\newenvironment{example}
 {\par\noindent\refstepcounter{theorem}{\bf Example \thetheorem}\ }
 {\raisebox{1mm}{\framebox{}}\pagebreak[2]}

%\newenvironment{algorithm}
% {\par\noindent\refstepcounter{theorem}{\bf Algorithm \thetheorem}\ }
% {\raisebox{1mm}{\framebox{}}\pagebreak[2]}

\newenvironment{proof}
 {\par\noindent{\bf Proof.}}
 {\rule{2mm}{2mm}\pagebreak[2]}

\newenvironment{proofof}[1]
 {\par\noindent{\bf Proof of #1.}}
 {\rule{2mm}{2mm}\pagebreak[2]}

\numberwithin{figure}{section}  % needs package amsmath

%%%%%%%%%%%%%%%%%%%%%%%%%%%%%%%%%%%%%%%%%%%%%%%%%%%%%%%%%%%%%%%%%%%%%%%%%%%%

\begin{document}
\title{\bf Finite sections of random Jacobi operators}
\author{{\sc Marko Lindner}\quad and\quad {\sc Steffen Roch}}
\date{\today}
\maketitle
\begin{quote}
\renewcommand{\baselinestretch}{1.0}
\footnotesize {\sc Abstract.} This article is about a problem in the
numerical analysis of random operators. We study a version of the
finite section method for the approximate solution of equations
$Ax=b$ in infinitely many variables, where $A$ is a random Jacobi
operator. In other words, we approximately solve infinite second
order difference equations with stochastic coefficients by reducing
the infinite volume case to the (large) finite volume case via a
particular truncation technique. For most of the paper we consider
non-selfadjoint operators $A$ but we also comment on the
self-adjoint case when simplifications occur.
\end{quote}

\noindent
{\it Mathematics subject classification (2000):} 65J10; Secondary 47B36, 47B80.\\
{\it Keywords and phrases:} finite section method, random operator,
Jacobi operator%, stochastic difference equation
\section{Introduction}
Let $U,\,V$ and $W$ be non-empty compact subsets of the complex
plane, and put
\begin{equation} \label{eq:defminmax}
u^*\,:=\,\max_{u\in U}|u|,\quad v_*\,:=\,\min_{v\in V}|v|,\quad
w^*\,:=\,\max_{w\in
W}|w|\quad\textrm{and}\quad\delta\,:=\,v_*-(u^*+w^*).
\end{equation}
%As usual,
We write $\N$, $\Z$, $\R$ and $\C$ for the sets of all
positive integer, integer, real and complex numbers.
\medskip

{\bf Infinite matrices. } In this paper we study bi- and
semi-infinite matrices of the form
\begin{equation} \label{eq:A}
A = \left(\begin{array}{ccccccc} \ddots&\ddots\\
\ddots&v_{-2}&w_{-2}\\
&u_{-1}&v_{-1}&w_{-1}\\
\cline{4-4}
&&u_{0}&\multicolumn{1}{|c|}{v_0}&w_0\\
\cline{4-4}
&&&u_{1}&v_1&w_1\\
&&&&u_2&v_2&\ddots\\
&&&&&\ddots&\ddots
\end{array}\right)\quad \textrm{and}\quad
A_+ = \left(\begin{array}{ccccc}
v_{1}&w_{1}\\
u_{2}&v_{2}&w_{2}\\
&u_{3}&v_3&w_3\\
&&u_{4}&v_4&\ddots\\
&&&\ddots&\ddots
\end{array}\right)
\end{equation}
with entries $u_i\in U$, $v_i\in V$ and $w_i\in W$ for all $i$ under
consideration, where the box marks the matrix entry of $A$ at
$(0,0)$. As usual, we call $\uu:=(u_i)$, $\vv:=(v_i)$, and
$\ww:=(w_i)$ the sub-, main- and superdiagonal of $A$, resp.\!
$A_+$. We understand $A$ and $A_+$ as linear operators, again
denoted by $A$ and $A_+$, acting boundedly, by matrix-vector
multiplication, on the standard spaces $\ell^p(\Z)$ and $\ell^p(\N)$
of bi- and semi-infinite complex sequences with $p\in [1,\infty]$.
It is clear that the matrices \eqref{eq:A} are in general not
self-adjoint. We will study the selfadjoint case, when
$w_i=\overline u_{i+1}$ for all $i$, separately in Section
\ref{sec:selfadjoint}.

The sets of all operators $A$ and $A_+$ from \eqref{eq:A} with
entries $u_i\in U$, $v_i\in V$ and $w_i\in W$ for all indices $i$
that occur will be denoted by $M(U,V,W)$ and $M_+(U,V,W)$,
respectively. The set of all $n\times n$ matrices with subdiagonal
entries in $U$, main diagonal entries in $V$ and superdiagonal
entries in $W$ (and all other entries zero) will be called
$M_n(U,V,W)$ for $n\in\N$, and we finally put $M_{\rm
fin}(U,V,W)=\cup_{n\in\N}\, M_n(U,V,W)$.

Recall that a bounded linear operator $B:X\to Y$ between Banach
spaces is a {\sl Fredholm operator} if the dimension, $\alpha$, of
its null-space is finite and the codimension, $\beta$, of its image
in $Y$ is finite. In this case, the image of $A$ is
closed in $Y$ and the integer $\ind A:=\alpha-\beta$ is called the
{\sl index} of $A$. For a bounded linear operator $B$ on
$\ell^p(\I)$ with $\I\in\{\Z,\N,\Z\setminus\N\}$, we write $\specn^p
B$, $\specn_{\rm ess}^p\,B$ and $\spptp B$ for the sets of all
$\lambda\in\C$ for which $B-\lambda I$ is, respectively, not
invertible, not a Fredholm operator or not injective on
$\ell^p(\I)$. Because $A$ and $A_+$ in \eqref{eq:A} are band
matrices, their spectrum and essential spectrum do not depend on the
underlying $\ell^p$-space \cite{Kurbatov,Li:Wiener,Roch:ellp}, so
that we will just write $\spec B$ and $\spess B$ for operators $B$
in $M(U,V,W)$ and in $M_+(U,V,W)$.
\medskip

{\bf Random alias pseudoergodic operators. } Our particular interest
is on random operators in $M(U,V,W)$ and $M_+(U,V,W)$. We model
randomness by the following concept: Given a metric space $(\M,d)$
and an index set $\I\in\{\Z,\N,\Z\setminus\N\}$, we say that a
sequence $a=(a_i)_{i\in\I}$ in $\M$ is {\sl pseudoergodic} if for
every $\eps>0$, all $n\in\N$ and all $b=(b_i)_{i=1}^n\in\M^n$, there
is a $k\in\I$ such that $d(a_{k+i},b_i)<\eps$ for all $i=1,...,n$.
In particular, if $(\M,d)$ is a discrete space then $a=(a_i)$ is
pseudoergodic if and only if every finite vector over $\M$ can be
found (as a sequence of consecutive entries) in $a$. For a finite
set $\M$, a pseudoergodic sequence $(a_i)_{i\in\N}$ can be
constructed by writing all $\M$-valued sequences of length $1$, then
$2$, then $3$, $\dots$ in a row. For $\M=\{0,1\}$, this is done by
stringing together the binary expansions of all natural numbers.

We will call $(a_i)_{i\in\Z}$ : {\sl left-pseudoergodic} if
$(a_i)_{i\in\Z\setminus\N}$ is pseudoergodic, {\sl
right-pseudoergodic} if $(a_i)_{i\in\N}$ is pseudoergodic, and {\sl
bi-pseudoergodic} if both $(a_i)_{i\in\Z\setminus\N}$ and
$(a_i)_{i\in\N}$ are pseudoergodic. It is a simple exercise to show
that $(a_i)_{i\in\Z}$ is pseudoergodic if and only if it is right-
or left-pseudoergodic.

Pseudoergodicity was introduced by Davies
\cite{Davies2001:PseudoErg} to study spectral properties of random
operators while eliminating probabilistic arguments. Indeed, if
$a=(a_i)_{i\in\I}$, where all entries $a_i$ are independent (or at
least not fully correlated) samples from a random variable with
values (densely) in $\M$ then, with probability one, $a$ is
pseudoergodic in cases $\I\in\{\N,\Z\setminus\N\}$ and
bi-pseudoergodic in case $\I=\Z$ (e.g. \cite[\S 5.5.3]{Li:Habil}).

We call an operator $A\in M(U,V,W)$ {\sl pseudoergodic}, {\sl
left-pseudoergodic}, {\sl right-pseudoergodic} or {\sl
bi-pseudo\-ergodic} and write $A\in\PE(U,V,W)$, $A\in\PEL(U,V,W)$,
$A\in\PER(U,V,W)$ or $A\in\PE_2(U,V,W)$, respectively, if
$a=(a_i)_{i\in\Z}$ with $a_i:=(u_i,v_i,w_i)\in \M:= U\times V\times
W\subset\C^3$ has the corresponding property. So we have
\begin{eqnarray*}
\PE(U,V,W) &=& \PEL(U,V,W)\ \cup\ \PER(U,V,W),\\
\PE_2(U,V,W) &=& \PEL(U,V,W)\ \cap\ \PER(U,V,W).
\end{eqnarray*}
If $A\in\PER(U,V,W)$ then we will write $A_+\in\PE_+(U,V,W)$ for the
corresponding semi-infinite submatrix $A_+$ of $A$ from
\eqref{eq:A}.
We will say a little bit about spectral properties of pseudoergodic
operators $A$ and $A_+$ but will mainly focus on another problem:
\medskip

{\bf The finite section method (FSM). } If one wants to solve an
equation
\begin{equation} \label{eq:Ax=b}
Ax\,=\,b,\qquad\textrm{i.e.}\qquad \sum_{j\in\Z} a_{ij}\ x(j)\ =\
b(i),\quad i\in\Z
\end{equation}
on $X=\ell^p(\Z)$ approximately, where $A:X\to X$ (bounded) and
$b\in X$ are given and $x\in X$ is sought for, one often uses a
projection method. Therefore, let $P_{l,r}:X\to X$ stand for the
operator of multiplication by the characteristic function of the
discrete interval $\Z \cap [l,...,r]$ for $l,r\in\Z$ with $l\le r$
and denote the image of
$P_{l,r}$ by $X_{l,r}\cong \C^{r-l+1}$. One then picks sequences of
integers $l_1,l_2,...\to -\infty$ and $r_1,r_2,...\to+\infty$ and
replaces the infinite system \eqref{eq:Ax=b} by the sequence of
finite systems
\begin{equation} \label{eq:Anxn=b}
P_{l_n,r_n}AP_{l_n,r_n}x_n\,=\,P_{l_n,r_n}b,\qquad\textrm{i.e.}\qquad
\sum_{l_n\le j\le r_n} a_{ij}\ x_n(j)\ =\ b(i),\quad l_n\le i\le r_n
\end{equation}
with $n \in \N$ . The aim is that, assuming invertibility of $A$
(i.e. unique solvability of \eqref{eq:Ax=b} for all $b\in X$), also
\eqref{eq:Anxn=b} shall be uniquely solvable for all sufficiently
large $n$ and the solutions $x_n\in X_{l_n,r_n}$ shall remain
bounded in $n$ and converge componentwise\footnote{For
$p\in(1,\infty)$ this is equivalent \cite{RaRoSiBook} to convergence
of the solutions $x_n$ (extended by zero) to $x$ in $X=\ell^p(\Z)$.}
to the solution $x$ of \eqref{eq:Ax=b}. If the latter is the case
for all right-hand sides $b\in X$ then we say that the {\sl finite
section method (short: FSM)} with cut-offs at $(l_n)$ and $(r_n)$
{\sl is applicable} for $A$.

If $(a_{ij})$ is a band matrix, i.e. $a_{ij}=0$ for $|i-j|>d$ with
some $d\in\N$ (as is the case for our operators \eqref{eq:A}), then
the FSM is applicable if and only if $A$ is invertible and the
sequence $(A_n):=(P_{l_n,r_n}AP_{l_n,r_n})_{n\in\N}$ is {\sl stable}
\cite{RoSi}. By the latter we mean that there exists a $n_0\in\N$
such that $A_n:X_{l_n,r_n}\to X_{l_n,r_n}$ is invertible for all
$n\ge n_0$ and $\sup_{n\ge n_0} \|A_n^{-1}\|<\infty$.

In the case $l_n=-n,\ r_n=n$ we will speak of the {\sl full FSM} for
$A$. Recently, it has been shown in different situations that (and
how) applicability of the FSM can be established by choosing the
sequences $(l_n)$ and $(r_n)$ accordingly
\cite{Li:FSMsubs,RaRoSi:FSM_AP,RaRoSi:FSMsubs} if the full FSM is
not applicable. We will formulate a condition for applicability of
the FSM in the case of general sequences $(l_n)$ and $(r_n)$ and
then apply this result to the case of pseudoergodic operators
\eqref{eq:A}.
For semi-infinite systems on $X=\ell^p(\N)$, replace $\Z$ by $\N$
and $l_n$ by $1$ in all of the above.
\medskip

{\bf Motivation. } A major motivation for the study of random Jacobi
operators, their spectra and the solutions of the corresponding
operator equations comes from condensed matter physics: Questions
about the conductivity of certain (composed, disordered) media,
about flux lines in superconductors or about systems of asymmetricly
hopping particles have been modeled by random Schr\"odinger
operators (Anderson model \cite{Anderson58,Anderson61}),
non-selfadjoint versions (Hatano \& Nelson
\cite{HatanoNelson96,HatanoNelson97,HatanoNelson98}) and other
non-selfadjoint random Jacobi operators (Feinberg \& Zee
\cite{FeinZee99a,FeinZee99b}). Similar models arise in population
biology \cite{NelsonShnerb}. Besides such discrete models also
continuous problems that have been described by a stochastic
differential equation in 1D lead, after suitable discretization, to
a matrix equation of the kind studied here.

We give some upper and lower bounds on the spectrum of our operators
but mainly focus on the approximate solution of operator equations
$Ax=b$ via the FSM. The latter can however be useful for spectral
studies again: The inverse power method for the computation of the
eigenvalue of $A$ that is closest to a given point $z\in\C$
approximates the (in modulus) largest eigenvalue of $(A-zI)^{-1}$ by
repeatedly solving equations $(A-zI)x^{(n+1)}=x^{(n)}$, $n=0,1,...$,
with a rather arbitrary (non-zero) initial vector $x^{(0)}$.

%In all these models, the task is to: (i) Locate the spectrum of the
%random Jacobi operator; then (ii) Study decay properties (so-called
%``localization/delocalization'') of the corresponding (generalized)
%eigenvectors, i.e. to solve an equation $(A-\lambda I)x=0$ with $A$
%of the form \eqref{eq:A}. As mentioned before, our focus here is on
%(ii) a bit more than on (i).
%
%\noindent \rule{\textwidth}{1pt} %%%%%%%%%%%%%%%%%%%%%%%%%%%%%%%%
%{\tt Dieses St\"uck m\"ussen wir noch bisschen
%\"uberarbeiten/relativieren, weil wir in Wirklichkeit keine
%singul\"aren Gleichungen l\"osen k\"onnen. K\"onnen wir vielleicht
%$\eps$-Pseudo-Eigenvektoren bestimmen oder approximieren? Was wir
%auf jeden Fall erw\"ahnen k\"onnten ist die inverse Potenzmethode,
%die dazu genutzt wird, den betragskleinsten (oder den zu einer
%gegebenen Zahl $z\in\C$ am n\"achsten liegenden) EW zu ermitteln.
%Sie kommt auch zum Einsatz, allerdings angewandt auf $A^*A$, wenn
%man den kleinsten Singul\"arwert sucht (wie z.B. in unseren Methoden
%in ).}
%
%\noindent \rule{\textwidth}{1pt} %%%%%%%%%%%%%%%%%%%%%%%%%%%%%%%%
%
\medskip

{\bf Historic remarks. } The idea of the FSM is so natural that it
is difficult to give a historical starting point. First rigorous
treatments are from Baxter \cite{Baxter} and Gohberg \& Feldman
\cite{GohbergFeldman} on Wiener-Hopf and convolution operators in
dimension $N=1$ in the early 1960's. For convolution equations in
higher dimensions $N\ge 2$, the FSM goes back to Kozak \& Simonenko
\cite{Kozak,KozakSimonenko}, and for general band-dominated
operators with scalar \cite{RaRoSi1998} and operator-valued
\cite{RaRoSi2001,RaRoSi2001:FSM} coefficients, most results are due
to Rabinovich, Roch \& Silbermann. For the state of the art in the
scalar case for $p=2$, see \cite{Roch:FSM}. The quest for stable
subsequences if the full FSM itself is instable is getting more
attention recently
\cite{RaRoSi:FSM_AP,RaRoSi:FSMsubs,SeidelSilbermann1,SeidelSilbermann2,Li:FSMsubs}.
In \cite{RaRoSi:FSMsubs}, the stability theorem for subsequences is
used to simplify the criterion in dimension $N=1$ by removing a
uniform boundedness condition. However, we are not aware of a
rigorous treatment of random (or pseudoergodic) operators via the
finite section method.

%%%%%%%%%%%%%%%%%%%%%%%%%%%%%%%%%%%%%%%%%%%%%%%%%%%%%%%%%%%%%%%%%%%%%%%%%

\section{Main results}
\subsection{Notations}
We first need some geometric notations: For sets $S,T\subseteq\C$ we
put $S+T:=\{s+t:s\in S,\,t\in T\}$ and we write $s+T:=\{s\}+T$ and
$sT:=\{st:t\in T\}$ if $s\in\C$. By $\T=\{z\in\C:|z|=1\}$,
$\D=\{z\in\C:|z|<1\}$ and $\ovD=\D\cup\T$ we denote the unit circle,
the unit disk and its closure. So, for example, $S+\eps\ovD$ is the
closed $\eps$-neighborhood of $S\subseteq\C$ with $\eps> 0$.

\noindent
\begin{center}
\includegraphics[width=\textwidth]{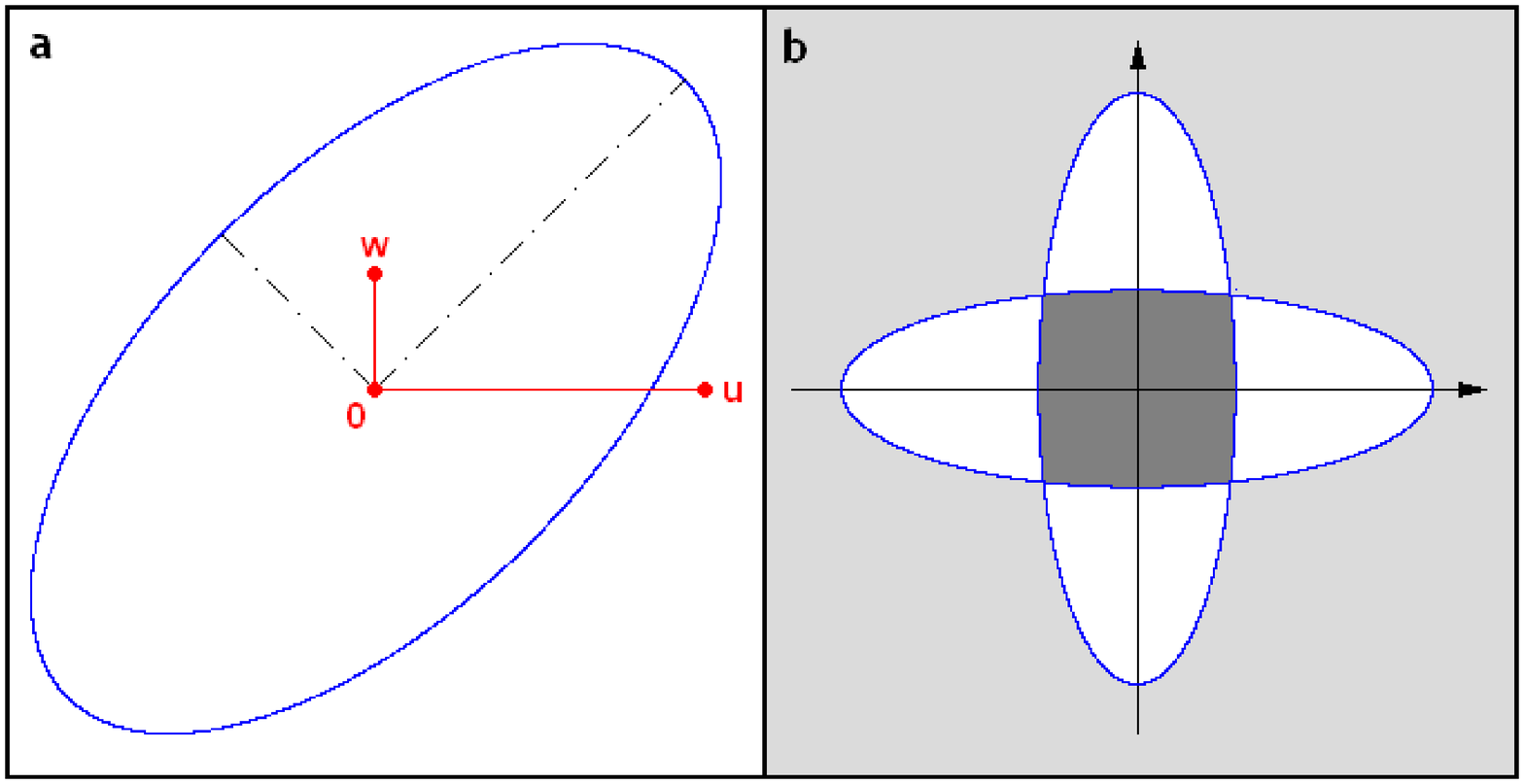}
\end{center}
~\\[-17mm]  % Sorry, dirty layout pimping...
\begin{figure}[h]
\caption{\footnotesize a) This is the ellipse $E(u,w)$ with $u=3$
and $w=\ri$. The major axis of the ellipse bisects the angle between
$u$ and $w$ at the origin. The half-axes (dotted lines) have length
$|3|\pm|\ri|$, i.e. $4$ and $2$.\newline b) We see $E_+(U,W)$ in
dark gray and $E_-(U,W)$ in light gray for $U=\{2\}$ and
$W=\{-1,1\}$. $E(U,W)$ is the union of the two ellipses $E(2,-1)$
and $E(2,1)$. In Sections \ref{sec:spec} and \ref{sec:Fred} we show
that, for $A\in\PE(U,V,W)$ to be Fredholm (and hence invertible) it
is necessary that either $V\subseteq E_+(U,W)$, in which case $\ind
A_+=-1$, or $V\subseteq E_-(U,W)$, in which case $\ind A_+=0$. On
the other hand, if $V\subset\D\subset E_+(U,W)$ then $A$ is
invertible.} \label{fig:ellipse}
\end{figure}

For $u,w\in\C$, put
\begin{equation} \label{eq:ellipse}
E(u,w)\ :=\ \big\{\, v\in\C\ :\
|v+2\sqrt{uw}|+|v-2\sqrt{uw}|\,=\,2(|u|+|w|)\,\big\},
\end{equation}
which is the ellipse that is centered at 0, has half-axes of length
$|u|+|w|$ and $\big|\,|u|-|w|\,\big|$ and focal points $\pm
2\sqrt{uw}$ (so that the major axis of $E(u,w)$ bisects the angle
between $u$ and $w$ at the origin).
By $E_+(u,w)$ and $E_-(u,w)$ we denote the bounded (interior) and
the unbounded (exterior) component of $\C\setminus E(u,w)$,
respectively. Now, for non-empty $U,W\subset\C$, let
\begin{equation*} \label{eq:ellsets}
E(U,W)\ :=\ \bigcup_{\scriptsize \begin{array}{c}u\in U\\[-0.5mm]w\in
W\end{array}} E(u,w)\qquad\textrm{and}\qquad E_\pm(U,W)\ :=\
\bigcap_{\scriptsize \begin{array}{c}u\in U\\[-0.5mm]w\in
W\end{array}} E_\pm(u,w).
\end{equation*}
Note that $E(u,w)=-E(u,w)$ for all $u,w\in\C$, so that also
$E(U,W)=-E(U,W)$ and $E_\pm(U,W)=-E_\pm(U,W)$ hold (see Figure
\ref{fig:ellipse} for an example).

\subsection{Spectrum and essential spectrum} \label{sec:spec}
Let $U,V,W\subset\C$ be non-empty and compact sets, and recall
\eqref{eq:defminmax}. Then we have the following result about
spectrum and essential spectrum of our pseudoergodic operators
\eqref{eq:A}:

\begin{theorem} \label{th:spec}
{\bf a) } For $A\in\PE(U,V,W)$ and $A_+\in\PE_+(U,V,W)$, the
following holds:
\begin{eqnarray} \nonumber
V+E(U,W) &\subseteq& \\
\nonumber \bigcup_{B\in M(U,V,W)}\!\!\!\!\!\!\!\!\!\spec B& =&
\bigcup_{B\in M(U,V,W)}\!\!\!\!\!\!\!\!\!\sppt B \ =\ \bigcup_{B\in
M(U,V,W)}\!\!\!\!\!\!\!\!\!\spess B
\ =\ \bigcup_{B_+\in M_+(U,V,W)}\!\!\!\!\!\!\!\!\!\!\!\spess B_+\\
\label{eq:specA} &=& \spess A\ =\ \spec A\ =\ \spess A_+\ \subseteq\
\spec
A_+\\
\nonumber &\subseteq& V+(u^*+w^*)\ovD
\end{eqnarray}

{\bf b) } The upper bound $\spec A \subseteq V+(u^*+w^*)\ovD$ from
\eqref{eq:specA} holds for arbitrary $A\in M(U,V,W)$ (as well as for
semi-infinite and finite matrices $A$). For bi-infinite matrices
$A\in M(U,V,W)$ we can improve this upper bound on $\spec A$ under
one of the following conditions:

\begin{tabular}{l}
if $u_*>w^*$, i.e. $|u|>|w|$ $\forall u\in U, w\in W$, then
$\displaystyle \spec
A\subset\C\setminus\bigcap_{v\in V}\Big(v+(u_*-w^*)\D\Big)$,\\
if $u^*<w_*$, i.e. $|u|<|w|$ $\forall u\in U, w\in W$, then
$\displaystyle\spec A\subset\C\setminus \bigcap_{v\in
V}\Big(v+(w_*-u^*)\D\Big)$,
\end{tabular}\\
where, in addition to \eqref{eq:defminmax}, we define $u_* :=
\min_{u\in U}|u|$ and $w_* := \min_{w\in W}|w|$.
\end{theorem}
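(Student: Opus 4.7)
The proof splits into the chain of spectral identities in (a), which is attacked via pseudoergodicity combined with limit-operator theory, and the Neumann-series estimates in (b).

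\textbf{Chain of identities in (a).} The inclusion $V+E(U,W)\subseteq\bigcup_B\sppt B$ comes from constant-diagonal (tridiagonal Laurent) operators: for $(u,v,w)\in U\times V\times W$, the constant operator $B_{u,v,w}\in M(U,V,W)$ satisfies $B_{u,v,w}(z^n)_{n\in\Z}=(uz^{-1}+v+wz)(z^n)$, so every $\lambda=v+uz^{-1}+wz$ with $z\in\T$ (the parametrisation of $v+E(u,w)$) is an $\ell^\infty$-eigenvalue of $B_{u,v,w}$. The non-trivial identity is $\bigcup_B\spec B\subseteq\spess A$: given $B\in M(U,V,W)$ and $\lambda\in\spec B$, I would take a Weyl sequence for $B-\lambda I$, truncate it to finite windows, and translate the windows into $A$ along indices going to infinity; pseudoergodicity guarantees that each finite pattern of $B$ appears inside $A$ up to arbitrary precision, so the translates form a singular Weyl sequence for $A-\lambda I$, certifying $\lambda\in\spess A$. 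The reverse inclusions $\spess A\subseteq\spec A\subseteq\bigcup_B\spec B\subseteq\bigcup_B\spess B$ are trivial or follow from the previous step. The identification with $\bigcup_B\sppt B$ uses the same translation mechanism together with a compactness/diagonal extraction: any $\ell^\infty$ approximate eigenvector for $A-\lambda I$ can be converted into a genuine $\ell^\infty$-eigenvector of some $B'\in M(U,V,W)$ (a limit operator of $A$), giving $\lambda\in\sppt B'$. For the semi-infinite operator $A_+$, the limit operators under right-translations are exactly the bi-infinite elements of $M(U,V,W)$ (by right-pseudoergodicity), so limit-operator theory \cite{RaRoSiBook} yields $\spess A_+=\bigcup_B\spec B=\spess A$, and $\spess A_+\subseteq\spec A_+$ trivially closes the chain; the identity with $\bigcup_{B_+}\spess B_+$ follows by taking $B_+$ right-pseudoergodic so that its limit operators again exhaust $M(U,V,W)$.

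\textbf{Upper bound (a) and improved bounds (b).} Splitting $A=D+(A-D)$ with $D=\diag(v_i)$, one has $\|A-D\|\le u^*+w^*$ and $\|(D-\lambda I)^{-1}\|\le 1/\dist(\lambda,V)$ on every $\ell^p$. The factorisation $A-\lambda I=(D-\lambda I)(I+(D-\lambda I)^{-1}(A-D))$ together with a Neumann-series estimate yields invertibility whenever $\dist(\lambda,V)>u^*+w^*$; this proves the upper bound uniformly for bi-infinite, semi-infinite, and finite matrices. For the sharpened bound in (b) when $u_*>w^*$ on bi-infinite $A$, the subdiagonal part $L$ (the weighted right-shift $e_j\mapsto u_{j+1}e_{j+1}$) is boundedly invertible on $\ell^p(\Z)$ with $\|L^{-1}\|_p=1/u_*$. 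Writing $A-\lambda I=L(I+T)$ with $T:=L^{-1}(D-\lambda I+R)$ and $R$ the superdiagonal, $T$ decomposes as a sum of two weighted shifts (by $-1$ and $-2$) whose $\ell^p$-norms are bounded uniformly in $p$ by $(\sup_V|\lambda-v|)/u_*$ and $w^*/u_*$ respectively. For $\lambda\in\bigcap_{v\in V}(v+(u_*-w^*)\D)$, compactness of $V$ gives $\sup_V|\lambda-v|<u_*-w^*$, whence $\|T\|_p<1$ and $A-\lambda I$ is invertible. The case $u^*<w_*$ is symmetric, factoring out $R$ instead of $L$.

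\textbf{Main obstacle.} The technical heart is the transfer $\bigcup_B\spec B\subseteq\spess A$ and its $\sppt$-refinement, both of which rely on limit-operator and pseudoergodicity machinery (translation of Weyl sequences, together with a diagonal/compactness argument to promote $\ell^\infty$ approximate eigenvectors to genuine ones). Everything else reduces either to the explicit Laurent-symbol identification of $v+E(u,w)$ or to a routine Neumann-series computation, so the chain of equalities really stands or falls with the pseudoergodic transfer step.
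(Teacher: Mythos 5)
Your overall route coincides with the paper's: the chain of equalities is driven by limit operators plus pseudoergodicity (Lemma \ref{lem:pe_limops} together with Theorem \ref{th:limops_Fred}), the lower bound comes from the Laurent symbol as in \eqref{eq:spec_Laurent}, the upper bound from diagonal dominance and a Neumann series (Lemma \ref{lem:perturb}, Corollary \ref{cor:M(U,V,W)}), and part b) from factoring out the invertible weighted shift, exactly the paper's $S^{-1}(A-\lambda I)=\diag(u_i)(I+T)$ with $\|T\|<1$. Your treatment of the upper bound and of part b) is correct (only note $\|L^{-1}\|\le 1/u_*$ rather than $=$, which is all you use), and the $\ell^\infty$-eigenvector computation for the constant-diagonal operators is a legitimate substitute for quoting \eqref{eq:spec_Laurent}.

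The gap sits in your self-contained justification of the central transfer step. First, from $\lambda\in\spec B$ you ``take a Weyl sequence for $B-\lambda I$''; such a sequence need not exist, since $B-\lambda I$ can be injective and bounded below with non-dense range. A concrete instance inside the present class: the operator in $M(\{0,1\},\{0\},\{0,1\})$ acting by $e_j\mapsto e_{j+1}$ for $j\ge 0$ and $e_j\mapsto e_{j-1}$ for $j<0$ is an isometry on $\ell^2(\Z)$ whose range has codimension $2$, so $0\in\spec B$ but no approximate eigenvectors exist. Hence your translation argument, as stated, does not prove $\bigcup_B\spec B\subseteq\spess A$; you must additionally transfer Weyl sequences of the adjoint (whose finite windows again occur in $A^*$ by pseudoergodicity), or simply invoke, as the paper does, that Fredholmness of $A-\lambda I$ forces invertibility of every limit operator (Theorem \ref{th:limops_Fred}). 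Second, the identification of $\spess A$ with $\bigcup_B\sppt B$ presupposes that non-Fredholmness of $A-\lambda I$ already produces an $\ell^\infty$ approximate eigenvector to be ``promoted''; extracting such vectors from mere non-Fredholmness on $\ell^p$, $p\in[1,\infty]$, is precisely the nontrivial content of item (iii) of Theorem \ref{th:limops_Fred} (Chandler-Wilde--Lindner), not a routine compactness/diagonal extraction, so this must either be cited or proved. Third, your Weyl-sequence reasoning tacitly uses $p\in(1,\infty)$ (weakly null sequences killed by compacts); either reduce to $p=2$ via the $p$-independence of (essential) spectra for band operators, or again lean on the quoted theorems. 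Finally, the equality with $\bigcup_{B_+\in M_+(U,V,W)}\spess B_+$ needs both inclusions: for an arbitrary, not necessarily pseudoergodic, $B_+$ one uses $\opsp(B_+)\subseteq M(U,V,W)$ to get $\spess B_+\subseteq\spess A$, while $A_+\in M_+(U,V,W)$ supplies the reverse inclusion; choosing $B_+$ pseudoergodic only gives one direction.
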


%\noindent \rule{\textwidth}{1pt} %%%%%%%%%%%%%%%%%%%%%%%%%%%%%%%%
%
%{\tt Die untere Schranke $V+E(U,W)$ erhaelt man, wenn man sich von
%allen Limitoperatoren $B$ von $A$ nur die mit konstanten Diagonalen
%(Laurentoperatoren) anschaut. Die obere Schranke $V+B_{u^*+w^*}(0)$
%ist auch ganz simpel (Corollary \ref{cor:M(U,V,W)} unten, de facto
%Gershgorin fuer $\PE$). Ich wollte aber hier im Theorem nicht
%technischer werden als noetig und insbesondere auch noch nicht von
%Limitoperatoren reden (kommt spaeter).\\ Man kann die untere
%Schranke verbessern, wenn man sich auch noch Operatoren $B$ mit
%periodischen (konstant = 1-periodisch) Diagonalen anschaut. Die
%obere Schranke kann man verbessern, wenn man zum numerical range
%(evtl. higher order) uebergeht (Davies \& Co) oder unsere ``neuen''
%Sachen \cite{CW.Heng.ML:UpperBounds} benutzt. Das ganze ist ein
%weites Feld, das wir in diesem Paper vielleicht nicht weiter als
%oben geschehen beackern sollten.(?) }
%
%\noindent \rule{\textwidth}{1pt} %%%%%%%%%%%%%%%%%%%%%%%%%%%%%%%%

We see from \eqref{eq:specA} that the spectrum of $A$ and the
essential spectrum of $A$ and $A_+$ only depend on the sets $U,V,W$
but not on the pseudoergodic operators $A$ and $A_+$. So all
operators in $\PE(U,V,W)$ have the same (essential) spectrum. In
particular, in the case of random operators, $\spess A$, $\spec A$
and $\spess A_+$ do not depend on the distributions of the random
variables for sub-, main- and superdiagonal -- only on their
supports $U,V,W$. (Note that none of the above applies to $\spec
A_+$; this set does depend on the concrete operator $A_+$.)

\begin{example} \label{ex:spec}
{\bf a) Anderson model. } In \cite{Anderson58,Anderson61}, the
conductivity of 1D disordered media was studied. Here $U=W=\{1\}$
and $V\subset\R$, so that $A$ is a discrete Schr\"odinger operator
with random potential. In this case $E(U,W)=[-2,2]$. So our lower
and upper bound from Theorem \ref{th:spec} are $V+[-2,2]$ and
$V+2\ovD$. Together with $\spec A\subset\R$, by selfadjointness, we
get that the lower bound is also an upper bound, whence all sets in
\eqref{eq:specA} are equal to $V+[-2,2]$.

{\bf b) Hatano \& Nelson. } The so-called non-selfadjoint Anderson
model was introduced in
\cite{HatanoNelson96,HatanoNelson97,HatanoNelson98} for the study of
flux lines in type II superconductors under the influence of a
tilted external magnetic field. Here $U=\{e^g\}$, $V=[-a,a]$ and
$W=\{e^{-g}\}$, where $a$ and $g$ (the strength of the magnetic
field) are positive real parameters. Now $E:=E(U,W)$ has half-axes
of length $e^g\pm e^{-g}$ being part of the real and imaginary axis,
and $E$ gets closer to a circle as $g\to\infty$. Abbreviate $e^g+
e^{-g}=2 \cosh g=:c$ and $e^g- e^{-g}=2 \sinh g=:s$. If $V$ is (at
least) as long as the major axis of $E$, i.e. if $a\ge c$, then
$V+E$ and $V+c\ovD$, which are the lower and upper bound in
\eqref{eq:specA}, only differ by $2e^{-g}$ in Hausdorff distance. It
is easy to see that the closed numerical range of $A$ (which is
always an upper bound on $\spec A$) is contained in $V+\conv(E)$,
which is equal to $V+E$ if and only if $a\ge c$, so that all sets in
\eqref{eq:specA} are equal to $V+E$ in this case (cf.
\cite{Davies2001:SpecNSA}). If $a<c$ then the lower bound $V+E$ has
a hole around the origin, so that the best we can say then is
$V+E\subseteq\spec A\subseteq V+\conv(E)$. However, if $V$ is
shorter than the short axis of $E$, i.e. if $a<s$, then statement b)
of the theorem proves that there is indeed a hole in $\spec A$:
Since $u_*=e^g>e^{-g}=w^*$, we have that $\cap_{v\in
V}(v+(u_*-w^*)\D)=(-a+s\D)\cap(a+s\D)\ne\varnothing$ is in the
resolvent set of $A$. We summarize these bounds on $\spec A$ in case
$a<s$ in Figure \ref{fig:HatNel} below. A further study of the shape
and size of the hole in $\spec A$ is in
\cite{Davies2001:SpecNSA,Davies2001:PseudoErg,Davies2005:HigherNumRanges,MartinezThesis,MartinezHN}.

{\bf c) Feinberg \& Zee. } In
\cite{FeinZee99a,FeinZee99b,HolzOrlandZee} the case $U=\{1\}$,
$V=\{0\}$, $W=\T$ is studied. A simple computation shows that then
$E(U,W)=2\ovD=(u^*+w^*)\ovD$ holds, so that all sets in
\eqref{eq:specA} coincide. In the same papers the much more
complicated case with $W=\{\pm1\}$ is also studied. In this case,
$E(U,W)=[-2,2]\cup[-2\ri,2\ri]$ is far away from
$(u^*+w^*)\ovD=2\ovD$ (see
\cite{CW.Heng.ML:Sierp,CW.Heng.ML:UpperBounds,CW.Heng.ML:tridiag,HolzOrlandZee}
for sharper bounds in this case).
\end{example}

\begin{remark}
The upper and lower bound in \eqref{eq:specA} might create the
impression that spectrum and essential spectrum of $A$ can be
written as $V+S(U,W)$ with a set $S(U,W)$ independent of $V$, in
which case it would be sufficient to study the case $V=\{0\}$. To
see that this is not true, compare the cases $U\times V\times
W=\{0\}\times\{\pm 1\}\times \{1\}$ (see
\cite{TrefContEmb,Li:BiDiag}) and $U\times V\times
W=\{0\}\times\{0\}\times \{1\}$: In the first case one has
$S(U,W)=\ovD$, whereas in the second case, $S(U,W)=\T$.
\end{remark}

Both upper and lower bound in Theorem \ref{th:spec} can be improved:
The lower bound comes from evaluating $\cup\,\spec B$ with the union
taken over all $B\in M(U,V,W)$ that have constant diagonals. A
better lower bound can be derived in concrete examples by also
considering operators $B\in M(U,V,W)$ with diagonals of period 2, 3
or more (e.g.
\cite{CW.Heng.ML:Sierp,CW.Heng.ML:tridiag,Davies2001:SpecNSA,Li:BiDiag}).
The upper bound can be improved by different approaches such as
(higher order) numerical ranges or hulls
\cite{Davies2005:HigherNumRanges,Davies2007:Book} or by the more
recent ideas of \cite{CW.Heng.ML:UpperBounds}.

\subsection{Fredholmness, invertibility and the full FSM}
\label{sec:Fred}
Here are our results on invertibility, Fredholm property, and applicability of the full FSM.
\begin{theorem} \label{th:Fred}
If $A\in\PE(U,V,W)$ or $A_+\in\PE_+(U,V,W)$ is Fredholm then $V\cap
E(U,W)=\varnothing$. In fact, either

\begin{tabular}{cl}
(a)&$V\subseteq E_-(U,W)$,\quad or\\
(b)&$V\subseteq E_+(U,W)$ and $u_*>w^*$,\quad or\\
(c)&$V\subseteq E_+(U,W)$ and $u^*<w_*$,
\end{tabular}

\noindent where, in addition to \eqref{eq:defminmax}, we define
$u_* := \min_{u\in U}|u|$ and $w_* := \min_{w\in W}|w|$.

The three cases correspond to the Fredholm index of $A_+$ (the
so-called {\sl plus-index} of $A$):\\ (a)$\iff \ind A_+=0$,\quad
(b)$\iff \ind A_+ = -1$\quad and\quad (c)$\iff \ind A_+=1$.
\end{theorem}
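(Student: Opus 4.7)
The plan is to first derive $V \cap E(U,W) = \varnothing$ from Theorem \ref{th:spec}, then match $\ind A_+$ with the Fredholm index of constant-diagonal Toeplitz operators in $M_+(U,V,W)$ via a folding argument, and finally read off the three cases from the Gohberg-Krein symbol formula. For any triple $(u,v,w) \in U \times V \times W$, the bi-infinite Laurent operator $L_{u,v,w}\in M(U,V,W)$ with constant diagonals $(u,v,w)$ has $\spec L_{u,v,w} = v + E(u,w)$ on $\ell^p(\Z)$, this being the range of the symbol $a(z) = v + uz + wz^{-1}$ on $\T$. Theorem \ref{th:spec} gives $\spec L_{u,v,w} \subseteq \spess A = \spess A_+$, so the Fredholm hypothesis forces $0 \notin v + E(u,w)$, i.e., $v \notin E(u,w)$ (using $E(u,w)=-E(u,w)$). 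Ranging over triples yields $V \cap E(U,W) = \varnothing$; in particular each constant-diagonal Toeplitz $T_{u,v,w} \in M_+(U,V,W)$ is Fredholm.

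To identify $\ind A_+$, fix $(u,v,w) \in U \times V \times W$ and construct $B \in M(U,V,W)$ by gluing the constant diagonals of $T_{u,v,w}$ at positions $i \le 0$ to the diagonals of $A_+$ at positions $i \ge 1$ (any element of $U$ fills the missing boundary entry $u_1$). By Theorem \ref{th:spec} $B$ is invertible, so $\ind B = 0$. Split $\ell^p(\Z) = \ell^p(\Z\setminus\N) \oplus \ell^p(\N)$ via projections $P_\pm$; since $B$ is tridiagonal, both off-diagonal blocks $P_\pm B P_\mp$ are of rank at most one, so additivity of the Fredholm index yields
$$0 \;=\; \ind B \;=\; \ind\bigl(P_- B P_-\bigr)+\ind A_+.$$
The index-reversal isomorphism $J\colon\ell^p(\Z\setminus\N)\to\ell^p(\N)$, $(Jx)_n = x_{1-n}$, conjugates $P_- B P_-$ to the constant-diagonal Toeplitz $T_{w,v,u}$ on $\ell^p(\N)$, since reversal swaps sub- and super-diagonals. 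Hence $\ind A_+ = -\ind T_{w,v,u}$.

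The Gohberg-Krein formula $\ind T(a) = -\wind(a,0)$ applied to $a(z) = v+uz+wz^{-1}$ yields $\ind T_{u,v,w} = 0$ when $v \in E_-(u,w)$, $-1$ when $v \in E_+(u,w)$ with $|u|>|w|$ (the closed curve $a(\T)=v+E(u,w)$ encircles $0$ counterclockwise, the dominant term on $\T$ being $uz$), and $+1$ when $v \in E_+(u,w)$ with $|u|<|w|$ (clockwise orientation); the degenerate case $|u|=|w|$ forces $E_+(u,w)=\varnothing$. The same computation with $u,w$ swapped gives $\ind T_{w,v,u} = -\ind T_{u,v,w}$, so the previous identity becomes $\ind A_+ = \ind T_{u,v,w}$ for every admissible $(u,v,w)$. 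Since the single integer $\ind A_+$ must coincide simultaneously with $\ind T_{u,v,w}$ for every triple in $U\times V\times W$, the position of $V$ relative to the family of ellipses is uniformly forced into exactly one of the cases (a), (b), (c), with the stated values of $\ind A_+$.

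The main obstacle is the folding identity: one must verify the rank-one off-diagonal decomposition of $B$ and carefully track the sign flip produced by the reversal unitary (which sends $T_{u,v,w}$ to $T_{w,v,u}$ rather than to itself); it is exactly this sign flip that converts $-\ind T_{w,v,u}$ into $+\ind T_{u,v,w}$, so that the index of an arbitrary pseudoergodic $A_+$ is pinned down by any one constant-diagonal Toeplitz sample from $M_+(U,V,W)$.
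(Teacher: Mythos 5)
Your proposal is correct, and its first and last steps (Laurent spectra inside the essential spectrum forcing $V\cap E(U,W)=\varnothing$, and the winding-number/Gohberg--Krein case analysis of $\ind T_{u,v,w}$ including the orientation dichotomy $|u|\gtrless|w|$ and the degenerate case $|u|=|w|$) coincide with the paper's argument. Where you genuinely diverge is in how $\ind A_+$ is tied to the index of a constant-diagonal Toeplitz sample: the paper simply invokes Theorem \ref{th:limops_Fred} b), i.e.\ the cited limit-operator index theorem \eqref{eq:ind+} from \cite{RaRoRoe,RaRoSi:IndexFSM}, which says that all operators in $\opsp_+(A)=M(U,V,W)$ share the plus-index of $A$; you instead splice a hybrid operator $B\in M(U,V,W)$ (Toeplitz half on $i\le 0$, the diagonals of $A_+$ on $i\ge 1$), use Theorem \ref{th:spec} to get invertibility of every member of $M(U,V,W)$ and hence of $B$, apply index additivity modulo the two rank-one off-diagonal corners (which is exactly the content of Lemma \ref{lem:Fred}), and then undo the reflection $P_-BP_-\cong T_{w,v,u}=T_{u,v,w}^R$ with the sign flip $\ind T_{w,v,u}=-\ind T_{u,v,w}$ --- all of which I checked and which is sound, including the orientation reversal under $z\mapsto 1/z$. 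What your route buys is a more elementary and self-contained identification $\ind A_+=\ind T_{u,v,w}$: it avoids the deep index theorem for limit operators (part b) of Theorem \ref{th:limops_Fred}) and exploits instead the structural fact that pseudoergodicity makes $M(U,V,W)$ closed under such splicing, at the price of leaning on Theorem \ref{th:spec} (whose proof still uses part a) of the limit-operator machinery) and on compactness of $U,W$ to upgrade the pointwise inequalities $|u|>|w|$ to the strict uniform bound $u_*>w^*$; the paper's route is shorter but imports the external index result. Both arguments then conclude identically: the single integer $\ind A_+$ forces the same case (a), (b) or (c) for every triple $(u,v,w)$.
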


Note that, while the index of $A_+$ can be $-1$, $0$ or $1$, the
index of $A$ is always zero if $A$ is Fredholm; in fact, $A$ is
always invertible if Fredholm (see \eqref{eq:specA} or the following
theorem).

%\noindent \rule{\textwidth}{1pt} %%%%%%%%%%%%%%%%%%%%%%%%%%%%%%%%
%{\tt Das ist Euer Indextheorem \cite{RaRoRoe,RaRoSi:IndexFSM}, auf
%die Laurent-Limitoperatoren angewandt. }
%
%\noindent \rule{\textwidth}{1pt} %%%%%%%%%%%%%%%%%%%%%%%%%%%%%%%%

\newpage %%%%%%%%%  Sorry! Some more dirty layout pimping. %%%%%%%

\begin{theorem} \label{th:Fred_inv_stab}
Let $U,V,W\subset\C$ be non-empty and compact. For $A\in M(U,V,W)$,
we look at the following statements:

\begin{tabular}{rl}
  (i)&$A$ is a Fredholm operator,\\
 (ii)&$A$ is invertible,\\
(iii)&the full FSM is applicable to $A$,\\
 (iv)&all operators in $M(U,V,W)$, $M_+(U,V,W)$ and $M_{\rm fin}(U,V,W)$ are invertible\\
     &and their inverses are uniformly bounded from above,\\
  (v)&the full FSM is applicable to all operators in $M(U,V,W)$,\\
 (vi)&all $B\in M(U,V,W)$ are invertible,\\
(vii)&all $B\in M(U,V,W)$ are Fredholm operators.
\end{tabular}

{\bf a) } For general $A\in M(U,V,W)$, the following implications
trivially hold:
\[
\begin{array}{ccccccc}
(i) & \Leftarrow & (ii) & \Leftarrow & (iii) & \Leftarrow\\
\Uparrow & & \Uparrow & & \Uparrow & &(iv)\\
(vii) & \Leftarrow & (vi) & \Leftarrow & (v) & \Leftarrow\\
\end{array}
\]

{\bf b) } If $A\in\PE(U,V,W)$ then $(i),\,(ii),\,(vi)$ and $(vii)$
are equivalent and $(iii)\iff (v)$ holds.

{\bf c) } If $A\in\PE(U,V,W)$ and $0\in U,W$ then $(i)-(vii)$ are
all equivalent.

{\bf d) } If $\delta>0$ in \eqref{eq:defminmax} then $(i)-(vii)$
hold, where all the inverses are bounded above by $1/\delta$.
\end{theorem}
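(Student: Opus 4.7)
My plan is to dispose of parts (a) and (d) first — both are essentially routine — and then handle the deeper equivalences in (b) and (c) via Theorem~\ref{th:spec} together with an approximation-by-shifts argument based on pseudoergodicity.

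For \textbf{part (a)}, every implication is standard: invertibility forces the Fredholm property, giving (ii)$\Rightarrow$(i) and (vi)$\Rightarrow$(vii); applicability of the full FSM to an operator $B$ forces $B$ itself to be invertible, giving (iii)$\Rightarrow$(ii) and (v)$\Rightarrow$(vi); the fact that $A$ lies in $M(U,V,W)$ gives (v)$\Rightarrow$(iii) and (vi)$\Rightarrow$(ii); and the uniform bounds assumed in (iv), together with invertibility of every infinite operator and every finite matrix in the three classes, produce stability of every sequence of finite sections and hence both (iii) and (v). For \textbf{part (d)}, when $\delta>0$ every operator $B$ in $M(U,V,W)\cup M_+(U,V,W)\cup M_{\rm fin}(U,V,W)$, and every finite section $P_{l,r}BP_{l,r}$, is strictly diagonally dominant. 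Writing $B=D+R$ with $D$ the main-diagonal part (so $\|D^{-1}\|\le 1/v_*$) and $R$ the off-diagonal part (so $\|R\|\le u^*+w^*$), the Neumann series for $(I+D^{-1}R)^{-1}D^{-1}$ converges and yields $\|B^{-1}\|\le 1/(v_*-(u^*+w^*))=1/\delta$. This establishes (iv) and, by part (a), all of (i)--(vii).

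For \textbf{part (b)}, the equivalences (i)$\iff$(ii)$\iff$(vi)$\iff$(vii) drop out of Theorem~\ref{th:spec}a): for pseudoergodic $A$ one has
\[
\spec A \;=\; \spess A \;=\; \bigcup_{B\in M(U,V,W)}\spec B \;=\; \bigcup_{B\in M(U,V,W)}\spess B,
\]
and evaluating these four sets at $\lambda=0$ gives the four equivalences simultaneously. For (iii)$\iff$(v), only (iii)$\Rightarrow$(v) is non-trivial. I would invoke the standard limit-operator criterion: the sequence $(P_{-n,n}AP_{-n,n})_n$ is stable iff every strong-operator limit of shifts of the form $S_{-k_n}P_{-n,n}AP_{-n,n}S_{k_n}$ (along $k_n\in[-n,n]$) is invertible with a uniform bound on the inverses. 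By pseudoergodicity of $A$, the family of such limits exhausts all bi- and semi-infinite band matrices with sub-/main-/super-diagonals in $U/V/W$. For an arbitrary $B\in M(U,V,W)$ the corresponding family of limits is a \emph{subset} of this same collection, so it inherits the uniform invertibility bound; since $B$ is also invertible (by the set equation above applied in (vi)), the full FSM is applicable to $B$.

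For \textbf{part (c)}, the extra hypothesis $0\in U\cap W$ is used to \emph{insulate} any finite block inside a section of $A$. Given $B_m\in M_m(U,V,W)$ and $\eps>0$, pseudoergodicity of the diagonals of $A$ provides an index $k$ with $|a_{k+i-1,k+j-1}-(B_m)_{ij}|<\eps$ for $i,j\in\{1,\dots,m\}$ and, simultaneously, $|w_{k-1}|<\eps$ and $|u_{k+m}|<\eps$. For $n$ large enough the finite section $P_{-n,n}AP_{-n,n}$ therefore decomposes — up to an $\eps$-perturbation in norm — as a block-diagonal operator, one of whose blocks equals $B_m$. Assuming (iii), the inverses of the finite sections of $A$ are uniformly bounded, so letting $\eps\to 0$ transfers the same bound onto $B_m^{-1}$. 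Combined with the uniform bounds on inverses in $M(U,V,W)\cup M_+(U,V,W)$ that fall out of the limit-operator analysis used in part (b), this delivers (iv); the remaining equivalences then follow from (b) and (a). The \textbf{main obstacle} is precisely this limit-operator machinery in (b) for (iii)$\Rightarrow$(v) and the block-insulation step in (c): both require a careful transfer of uniform stability bounds for finite sections of $A$ into uniform bounds on inverses across the whole classes $M(U,V,W)$, $M_+(U,V,W)$ and $M_{\rm fin}(U,V,W)$, and it is this uniform transfer — not any individual invertibility statement — that is the heart of the argument.
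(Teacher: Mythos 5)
Your parts (a) and (d) are fine and coincide with the paper's argument (part (d) is exactly Lemma \ref{lem:perturb} and Corollary \ref{cor:M(U,V,W)}), and your derivation of $(i)\Leftrightarrow(ii)\Leftrightarrow(vi)\Leftrightarrow(vii)$ from Theorem \ref{th:spec} is a legitimate repackaging of the same limit-operator facts the paper uses. The problems are in the remaining, harder implications.

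In (b), for $(iii)\Rightarrow(v)$ you assert that, by pseudoergodicity of $A$, the strong limits of the shifted finite sections of $A$ ``exhaust all bi- and semi-infinite'' matrices with diagonals in $U,V,W$. That is not justified: $\PE(U,V,W)=\PEL(U,V,W)\cup\PER(U,V,W)$, so a pseudoergodic $A$ need only be right- (or left-) pseudoergodic, and then only one of $\opsp_-(A)$, $\opsp_+(A)$ equals $M(U,V,W)$. Concretely, if $A$ is right- but not left-pseudoergodic, condition $(iii)$ via Theorem \ref{th:FSM} gives invertibility of \emph{all} $C_-$ with $C\in M(U,V,W)$ but only of \emph{some} $B_+$, whereas the FSM criterion for an arbitrary $B\in M(U,V,W)$ requires all $D_+$ with $D\in\opsp_-(B)$, which need not lie in your family; your ``subset inheritance'' therefore fails on one side. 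The paper closes exactly this hole with the reflection argument: $B_+^R$ is of the form $C_-$ for some $C\in M(U,V,W)$, and a semi-infinite matrix is invertible iff its reflection is, so invertibility of all $C_-$ does yield invertibility of all $B_+$. Your argument is complete only under the stronger assumption $A\in\PE_2(U,V,W)$.

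In (c) the gap is structural. You prove (modulo the issue above) the implication $(iii)\Rightarrow(iv)$ by insulating a block $B_m$ inside a finite section of $A$ (incidentally, making the section genuinely block-diagonal needs all four coupling entries $w_{k-1},u_k,w_{k+m-1},u_{k+m}$ small, not just two; with two you only get a block-triangular structure, which still transfers the bound, so this is minor). But $(iii)\Rightarrow(iv)$, together with (a) and (b), only shows that $\{(iii),(iv),(v)\}$ are equivalent and imply $\{(i),(ii),(vi),(vii)\}$; it does not connect $(i)$ upward to $(iii)$ or $(iv)$, so ``$(i)$--$(vii)$ are all equivalent'' is not established. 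The whole point of (c) is the implication $(i)\Rightarrow(iv)$: Fredholmness of the single operator $A$ already forces uniform invertibility of all of $M(U,V,W)$, $M_+(U,V,W)$ and $M_{\rm fin}(U,V,W)$. The paper gets this from Theorem \ref{th:limops_Fred}: Fredholmness of $A$ gives invertibility of all limit operators, i.e.\ of all $B\in M(U,V,W)$, with a common bound $c$ (the norm of a Fredholm regularizer), and then uses $0\in U,W$ not inside sections of $A$ but to embed the other classes into $M(U,V,W)$, via $\Diag(D_+^R,D_+)$ for $D_+\in M_+(U,V,W)$ and $\Diag(\cdots,D,D,\cdots)$ for $D\in M_{\rm fin}(U,V,W)$. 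Without an argument of this kind (starting from $(i)$, not from $(iii)$), your proof of (c) is incomplete.
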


\begin{remark} \label{rem:fullFSM}
{\bf a) } Statement b) of the theorem shows how `hard' it is for a
pseudoergodic operator to be Fredholm (i.e. invertible) or to even
have an applicable full FSM. It also shows that, like the
(essential) spectrum, these properties only depend on the sets
$U,V,W$ but not on the concrete operator $A\in\PE(U,V,W)$.

{\bf b) } If $U,V,W$ are discrete sets and $0\in U,W$ then
$A\in\PE(U,V,W)$ decouples into a block diagonal operator
$A=\Diag(B_i:i\in\Z)$ with every $B\in M_{\rm fin}(U,V,W)$ appearing
as one of the blocks $B_i$, so that some of the above claims in c)
become fairly obvious then. However, note that we do not assume
$U,V,W$ to be discrete in Theorem \ref{th:Fred_inv_stab}.

{\bf c) } The condition $\delta>0$ in d) is equivalent to
$V\subset\C\setminus (u^*+w^*)\ovD$ or, to phrase it in the style of
\eqref{eq:specA}, to $0\not\in V+(u^*+w^*)\ovD$.
\end{remark}

Since applicability of the full FSM of an operator $A\in\PE(U,V,W)$
is determined by the sets $U,V,W$ only rather than by the operator,
it seems advisable to use a version of the more flexible FSM
\eqref{eq:Anxn=b} that gives credit to individual features of the
concrete pseudoergodic operator $A$ and will work under the sole
condition of invertibility of $A$, where the full FSM might fail.

%\noindent\rule{\textwidth}{2pt}\\
%{\tt Es w\"are gut, wenn wir wirklich ein Beispiel (d.h. Mengen
%$U,V,W$) angeben k\"onnten, wo die adaptive FSM klappt, w\"ahrend
%die full FSM schief geht. (Mit Plusindex 0.) Siehe meine Email vom 21.09.}\\
%\noindent\rule{\textwidth}{2pt}

We say `might fail' because we do not have an example of sets
$U,V,W$ and $A\in\PE(U,V,W)$, where the full FSM fails while the
following version applies, unless when $\ind A_+\ne 0$. However, we
can prove that our adapted FSM from Section \ref{sec:aFSM} generally
applies if $A$ is invertible -- which we doubt in the case of the
full FSM (even if $\ind A_+=0$).

\subsection{The FSM with adaptive cut-off intervals}
\label{sec:aFSM}
If Theorem \ref{th:Fred_inv_stab} does not yield applicability of
the full FSM for $A\in\PE(U,V,W)$ then we propose using the FSM
\eqref{eq:Anxn=b} with cut-offs at integer values $(l_n)$ and
$(r_n)$ that are adapted to the operator $A$ at hand.
\medskip

{\bf The adaptive FSM in the general case. } We start with a
statement for general tridiagonal operators $A\in M(U,V,W)$ or, in
fact, for even more general operators. For $X=\ell^p(\Z)$ with
$p\in[1,\infty]$, we write $A\in BO(X)$ and call $A$ a {\sl band
operator} if $A$ acts via matrix-vector multiplication by a band
matrix. Moreover, we write $A\in BDO(X)$ and call $A$ a {\sl
band-dominated operator} if $A$ is the limit (in the operator norm
induced by $\|\cdot\|_X$) of a sequence of band operators.

If $A\in BDO(X)$ is given by the matrix $(a_{ij})_{i,j\in\Z}$ and
$B\in BDO(X)$ is given by a matrix $(b_{ij})_{i,j\in\Z}$ then we
call $B$ a {\sl limit operator} of $A$ if there exists a sequence
$h=(h_1,h_2,...)$ of integers with $|h_n|\to\infty$ and $a_{i+h_n,j+h_n}
\to b_{ij}$ as $n \to \infty$ for all $i,j\in\Z$. In this case we
write $B=:A_h$. For a given sequence $h$ of integers
going to infinity, let
\[
\opsp_h(A)\ :=\ \{A_g:g \textrm{ is an infinite subsequence of } h
\textrm{ for which } A_g \textrm{ exists}\}.
\]
By a Bolzano-Weierstrass argument it can be seen that $\opsp_h(A)$
is always nonempty. We will also abbreviate
\[
\opsp_+(A)\ :=\ \opsp_{(1,2,3,...)}(A)\qquad\textrm{and}\qquad
\opsp_-(A)\ :=\ \opsp_{(-1,-2,-3,...)}(A)
\]
and put $\opsp(A):=\opsp_+(A)\cup\opsp_-(A)$, so that the latter is
the set of all limit operators of $A$.

In the semi-infinite case $X=\ell^p(\N)$, the spaces $BO(X)$ and
$BDO(X)$ are defined in the same way. For $A\in BDO(X)$ one then
also defines limit operators $A_h$ exactly as above -- provided that
$h=(h_1,h_2,...)$ tends to $+\infty$. Note that $A_h$ is in any case
bi-infinite, i.e. it acts on $\ell^p(\Z)$.

The set of pseudoergodic operators can be equivalently characterized
in terms of limit operators (see \cite[\S 3.4.10]{Li:Book} or
\cite[\S 5.5.3]{Li:Habil}):

\begin{lemma} \label{lem:pe_limops}
For an operator $A\in M(U,V,W)$, one has
\[
A\ \in\ \left\{
\begin{array}{rcccl}
\PEL(U,V,W) &&\iff&& \opsp_-(A)\\
\PER(U,V,W) &&\iff&& \opsp_+(A)\\
\PE(U,V,W) &&\iff&& \opsp(A)
\end{array}
\right\}\ =\ M(U,V,W).
\]
\end{lemma}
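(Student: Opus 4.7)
The plan is to prove all three equivalences simultaneously, since they differ only in whether the shift indices are restricted to $\N$, $\Z\setminus\N$, or $\Z$. The inclusion $\opsp_\ast(A)\subseteq M(U,V,W)$ (with $\ast\in\{+,-,\text{nothing}\}$) is immediate in all cases: entries of a limit operator $A_h$ are limits of entries of $A$, and since the three nonzero diagonals of $A$ take values in the compact sets $U$, $V$, $W$, so do the corresponding diagonals of any $A_h$. The nontrivial content is the reverse inclusion together with the equivalence
\[
A\in\PER(U,V,W)\iff M(U,V,W)\subseteq\opsp_+(A),
\]
and its analogues for $\opsp_-$ and $\opsp$.

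For the forward direction, suppose $A\in\PER(U,V,W)$ and let $B\in M(U,V,W)$ have diagonals $(u'_i,v'_i,w'_i)_{i\in\Z}$. For each $n\in\N$ I would apply pseudoergodicity of the one-sided sequence $(u_i,v_i,w_i)_{i\in\N}$ to the finite pattern $(u'_j,v'_j,w'_j)_{j=-n}^n$ with tolerance $1/n$, producing an index $h_n\in\N$ such that $d\bigl((u_{h_n+j},v_{h_n+j},w_{h_n+j}),(u'_j,v'_j,w'_j)\bigr)<1/n$ for all $|j|\le n$. The sequence $h=(h_n)$ then makes every fixed matrix entry of the shifted operator converge to the corresponding entry of $B$, so $A_h=B$ and $B\in\opsp_+(A)$.

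For the reverse, suppose $M(U,V,W)\subseteq\opsp_+(A)$, and let $\eps>0$, $n\in\N$, and a pattern $(b_1,\dots,b_n)\in(U\times V\times W)^n$ be given. Construct any $B\in M(U,V,W)$ whose sub-, main-, and superdiagonal triples at positions $1,\dots,n$ equal $b_1,\dots,b_n$ and whose remaining entries are arbitrary elements of the (non-empty) sets $U$, $V$, $W$. By hypothesis $B=A_h$ for some $h=(h_m)\to+\infty$, so entrywise convergence yields $d(a_{h_m+j},b_j)<\eps$ for $j=1,\dots,n$ once $m$ is large, giving the required shift $k:=h_m\in\N$. The left-pseudoergodic case is identical after replacing $\N$ by $\Z\setminus\N$, and the bi-pseudoergodic case only requires $|h_n|\to\infty$.

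The main obstacle is a small subtlety in the forward direction: plain pseudoergodicity yields \emph{some} shift that places the desired pattern close to the target, but does not directly guarantee $h_n\to\infty$ (which is needed for $A_h$ to be a bona fide limit operator). I would resolve this by a padding trick: to force the shift to exceed a prescribed $N\in\N$, apply pseudoergodicity not to the original pattern of length $2n+1$ but to the pattern of length $N+2n+1$ obtained by prepending $N$ arbitrary symbols, and then translate by $N$. This produces arbitrarily large shifts, which allows me to thin the $(h_n)$ into an honest strictly monotone sequence tending to $+\infty$ (respectively $-\infty$, or with $|h_n|\to\infty$ in the bi-infinite case).
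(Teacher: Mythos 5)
Note first that the paper does not prove this lemma at all: it is quoted with references to \cite{Li:Book} and \cite{Li:Habil}, so there is no internal proof to compare your argument against; I judge it on its own merits. Your overall strategy is the standard one and works for the two one-sided cases: compactness of $U,V,W$ and preservation of the band structure give $\opsp_\pm(A)\subseteq M(U,V,W)$, and the window-matching argument gives the reverse inclusion as well as the converse implications. In fact, for the one-sided cases your padding trick is not even needed: in the definition of pseudoergodicity for $\I=\N$ (resp.\ $\I=\Z\setminus\N$) the matching window $\{k+1,\dots,k+n\}$ must lie in the half-line, so after recentering a pattern of length $2n+1$ the shift satisfies $h_n=k_n+n+1\ge n+2$ (resp.\ $h_n\le -n$) automatically, and thinning to a monotone subsequence of $(1,2,\dots)$ (resp.\ $(-1,-2,\dots)$) is immediate.

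The genuine gap is in the third case, $A\in\PE(U,V,W)\Rightarrow\opsp(A)=M(U,V,W)$, which you call the ``bi-pseudoergodic'' case; note the lemma concerns $\PE=\PEL\cup\PER$, not $\PE_2=\PEL\cap\PER$ (with $\PE_2$ the reverse implication would in general be false). Here pseudoergodicity is taken with respect to $\I=\Z$, so the shift $k$ produced by the definition may lie anywhere in $\Z$, and prepending $N$ arbitrary symbols does \emph{not} push the matching window away from the origin: the enlarged pattern could still be matched inside a fixed bounded region, so your padding trick does not deliver $|h_n|\to\infty$ in this case. Two easy repairs: (i) invoke the identity $\PE(U,V,W)=\PEL(U,V,W)\cup\PER(U,V,W)$ stated in the paper's introduction and reduce to the one-sided cases you have already handled, since $\opsp(A)\supseteq\opsp_\pm(A)$ and $\opsp(A)\subseteq M(U,V,W)$ always; or (ii) replace padding by repetition: match the concatenation of $m$ copies of the pattern, which yields $m$ pairwise disjoint matching windows, and since only finitely many disjoint windows of a fixed length fit into any bounded set, matches must occur arbitrarily far from the origin, giving $|h_n|\to\infty$ and then a subsequence tending to $+\infty$ or $-\infty$. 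The remaining parts of your argument (the reverse implications and the inclusion $\opsp(A)\subseteq M(U,V,W)$) are fine.
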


Let $X=\ell^p(\Z)$ and $P:X\to X$ denote the operator of
multiplication by the characteristic function of $\N$, and let
$Q:=I-P$ be the complementary projector of $P$. Given an operator
$A\in BDO(X)$ with matrix $(a_{ij})_{i,j\in\Z}$, we write $A_+$ for
the compression $PAP|_{\im P}$ of $A$ to $\im P\cong\ell^p(\N)$;
that is, $A_+$ is the operator of multiplication by the matrix
$(a_{ij})_{i,j\in\N}$. Analogously, we write $A_-$ for the
compression $QAQ|_{\im Q}$ of $A$ to $\im
Q\cong\ell^p(\Z\setminus\N)$; that is, $A_-$ is the operator of
multiplication by the matrix $(a_{ij})_{i,j\in\Z\setminus\N}$. When
talking about their invertibility, Fredholmness or index, we always
understand $A_+$ and $A_-$ as operators on $\ell^p(\N)$, resp.
$\ell^p(\Z\setminus\N)$. If we are only interested in an operator on
$\ell^p(\N)$, we usually denote it by $A_+$ (indicating that it is
the compression of an operator $A$ on $\ell^p(\Z)$ to the positive
half-axis) to remind ourselves of the semi-infinite setting.

The following theorem is a generalization of results from
\cite{RaRoSi:FSMsubs,Li:FSMsubs} (which can be derived by
straightforward changes in the proofs there):

\begin{theorem} \label{th:FSM}
Let $X=\ell^p(\Z)$ with $p\in[1,\infty]$ and fix two sequences
$l=(l_n)_{n\in\N}$ and $r=(r_n)_{n\in\N}$ of integers
$l_1,l_2,...\to -\infty$ and $r_1,r_2,...\to +\infty$. For $A\in
BDO(X)$, the finite section method \eqref{eq:Anxn=b} is applicable
if and only if the following operators are invertible:
\begin{equation} \label{eq:limopsFSM}
A,\qquad \textrm{all operators }B_+\textrm{ with } B\in\opsp_l(A),
\qquad \textrm{all operators }C_-\textrm{ with } C\in\opsp_r(A).
\end{equation}
\end{theorem}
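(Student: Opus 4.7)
The plan is to derive the theorem as a modest extension of the FSM stability results in \cite{RaRoSi:FSMsubs,Li:FSMsubs}, which treat essentially the same setting; the only new feature is that the two cut-off sequences $(l_n)$ and $(r_n)$ are allowed to tend to infinity along completely arbitrary integer sequences. By definition of applicability and by the general stability theorem for band-dominated operators, the FSM is applicable if and only if $A$ is invertible and the sequence $A_n := P_{l_n,r_n} A P_{l_n,r_n}$ is stable. Since invertibility of $A$ is part of \eqref{eq:limopsFSM}, the task reduces to characterising stability of $(A_n)$ in terms of the remaining two conditions.

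For the necessity direction, suppose $(A_n)$ is stable. Writing $V_k$ for the bilateral shift by $k$, given $B\in\opsp_l(A)$ with defining subsequence $(l_{n_k})$, the shifted finite sections
\[
\tilde A_{n_k}\ :=\ V_{l_{n_k}}^{-1} A_{n_k} V_{l_{n_k}}^{}\ =\ P_{0,\,r_{n_k}-l_{n_k}}\,(V_{l_{n_k}}^{-1} A V_{l_{n_k}}^{})\,P_{0,\,r_{n_k}-l_{n_k}}
\]
form a stable sequence (shifts are isometric) and $\cP$-converge to $B_+$, since $r_{n_k}-l_{n_k}\to+\infty$ and $V_{l_{n_k}}^{-1} A V_{l_{n_k}}^{}$ tends to $B$ in the $\cP$-sense. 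The standard lower semicontinuity of $\|(\cdot)^{-1}\|$ under $\cP$-convergence of stable sequences then forces $B_+$ to be invertible. A mirrored argument with shifts by $-r_{n_k}$ gives invertibility of $C_-$ for each $C\in\opsp_r(A)$.

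For the sufficiency direction, one invokes the Rabinovich--Roch--Silbermann framework: $(A_n)$ is embedded in the Banach algebra $\cF$ of bounded matrix sequences modulo the ideal $\cG$ of null sequences, and stability becomes invertibility of $(A_n)+\cG$ in $\cF/\cG$. Three canonical families of homomorphisms on $\cF/\cG$ are available: (i) strong convergence of $A_n$ to $A$; (ii) $\cP$-limits of $V_{l_n}^{-1} A_n V_{l_n}^{}$ compressed to $\im P$, whose joint range is exactly $\{B_+ : B\in\opsp_l(A)\}$; (iii) the mirrored homomorphisms on the right end, with range $\{C_- : C\in\opsp_r(A)\}$. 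A local-principle argument, as carried out in \cite[Ch.~6]{Li:Book} and \cite{RaRoSi:FSMsubs}, shows that simultaneous invertibility of the images under all three families already forces invertibility in $\cF/\cG$, i.e.\ stability of $(A_n)$. The main technical obstacle is the case $p\in\{1,\infty\}$, where strong operator convergence is too weak for the usual $C^*$-style arguments; one must consistently work with $\cP$-convergence along the standard bounded approximate identity $(P_{-m,m})_{m\in\N}$. This substitution is already performed in \cite{RaRoSi:FSMsubs,Li:FSMsubs} and transfers verbatim to our asymmetric cut-offs, so apart from this bookkeeping no new ideas beyond those references are needed.
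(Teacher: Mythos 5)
Your proposal is correct and follows essentially the same route as the paper: the paper gives no independent proof of Theorem \ref{th:FSM}, stating only that it is a generalization of the results in \cite{RaRoSi:FSMsubs,Li:FSMsubs} obtainable by straightforward changes in the proofs there, which is precisely your strategy (shift-and-$\cP$-limit argument for necessity, the sequence-algebra/local-principle machinery for sufficiency, with $\cP$-convergence replacing strong convergence for $p\in\{1,\infty\}$). Your sketch merely makes explicit the adaptations the paper leaves to the reader.
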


The set of operators \eqref{eq:limopsFSM} is particularly handy if
the sequences $l$ and $r$ are such that
\begin{equation} \label{eq:singletons}
\textrm{the sets } \{B_+ : B\in\opsp_l(A)\} \textrm{ and } \{C_- :
C\in\opsp_r(A)\} \textrm{ are singletons,}
\end{equation}
which is equivalent to the existence of the strong limits $B_+$ and
$C_-$ of
\[
\left(\begin{array}{cccc}
v_{l_n}&w_{l_n}\\
u_{l_n+1}&v_{l_n+1}&w_{l_n+1}\\[-2mm]
&u_{l_n+2}&v_{l_n+2}&\ddots\\[-0mm]
&&\ddots&\ddots
\end{array}\right)\qquad\textrm{and}\qquad
\left(\begin{array}{cccc} \ddots&\ddots\\[-2mm]
\ddots&v_{r_n-2}&w_{r_n-2}\\
&u_{r_n-1}&v_{r_n-1}&w_{r_n-1}\\
&&u_{r_n}&v_{r_n}
\end{array}\right)
\]
as $n\to\infty$.  For \eqref{eq:singletons} it is sufficient (but
not necessary) that the limit operators $A_l=:B$ and $A_r=:C$ exist.

%In this case, $\opsp_l(A)=\{B\}$ and $\opsp_r(A)=\{C\}$, and the set
%\eqref{eq:limopsFSM} simplifies to the three operators
%\begin{equation} \label{eq:limops_aFSM}
%A,\qquad B_+, \qquad C_-.
%\end{equation}
%In fact, the conditions on the sequences $l$ and $r$ can be weakened
%from existence of $A_l$ and $A_r$ to existence of operators $B$ and
%$C$ with $\{D_+:D\in\opsp_l(A)\}=\{B_+\}$ and
%$\{D_-:D\in\opsp_r(A)\}=\{C_-\}$, so that, still,
%\eqref{eq:limopsFSM} becomes \eqref{eq:limops_aFSM}. This gives the
%following version of Theorem \ref{th:FSM}:
%
%\begin{corollary} \label{cor:aFSM}
%Let $X=\ell^p(\Z)$ with $p\in[1,\infty]$, $A\in BDO(X)$ and fix two
%sequences $l=(l_n)_{n\in\N}$ and $r=(r_n)_{n\in\N}$ of integers
%$l_1,l_2,...\to -\infty$ and $r_1,r_2,...\to +\infty$ such that the
%sets $\{D_+:D\in\opsp_l(A)\}=:\{B_+\}$ and
%$\{D_-:D\in\opsp_r(A)\}=:\{C_-\}$ are singletons. Then the finite
%section method \eqref{eq:Anxn=b} is applicable to $A$ if and only if
%the three operators \eqref{eq:limops_aFSM} are invertible.
%\end{corollary}
%
%In more explicit terms, the conditions on $l=(l_n)$ and $r=(r_n)$ in
%Corollary \ref{cor:aFSM} are that the strong limits $B_+$ and $C_-$
%of

Here is the version of Theorem \ref{th:FSM}
%and Corollary \ref{cor:aFSM}
for semi-infinite matrices:
\begin{theorem} \label{th:FSM+}
Let $X=\ell^p(\N)$ with $p\in[1,\infty]$ and fix a monotonously
increasing sequence $r=(r_n)_{n\in\N}$ of positive integers. For
$A_+\in BDO(X)$, the finite section method \eqref{eq:Anxn=b}, with
$l_n=1$ for all $n\in\N$, is applicable if and only if the following
operators are invertible:
\begin{equation} \label{eq:limopsFSM+}
A_+,\qquad \textrm{all operators }C_-\textrm{ with }
C\in\opsp_r(A_+).
\end{equation}
\end{theorem}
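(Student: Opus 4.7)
The cleanest route is to reduce Theorem \ref{th:FSM+} to its bi-infinite counterpart, Theorem \ref{th:FSM}, by extending $A_+$ trivially to the negative half-axis. Concretely, define the bi-infinite matrix
\[
\tilde A\ :=\ \left(\begin{array}{cc} I & 0\\ 0 & A_+\end{array}\right)
\]
on $\ell^p(\Z) \cong \ell^p(\Z\setminus\N)\oplus\ell^p(\N)$, where $I$ is the identity on the negative half-axis. Since $A_+\in BDO(\ell^p(\N))$ and $I$ is trivially a band operator, $\tilde A\in BDO(\ell^p(\Z))$. Now fix any auxiliary sequence $l=(l_n)$ of negative integers tending to $-\infty$ (say $l_n=-n$). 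The key observation is that, for every $n$, the finite section $P_{l_n,r_n}\tilde A\,P_{l_n,r_n}$ is block-diagonal with respect to the decomposition $\{l_n,\dots,0\}\cup\{1,\dots,r_n\}$: the first block equals the identity on $\C^{|l_n|+1}$, and the second block is precisely $P_{1,r_n}A_+P_{1,r_n}$. Therefore the FSM \eqref{eq:Anxn=b} for $\tilde A$ with cut-offs $(l_n,r_n)$ is applicable if and only if the FSM for $A_+$ with $l_n=1$ and right cut-offs $r_n$ is applicable.

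Next I invoke Theorem \ref{th:FSM}: the FSM for $\tilde A$ is applicable iff $\tilde A$, all $B_+$ with $B\in\opsp_l(\tilde A)$, and all $C_-$ with $C\in\opsp_r(\tilde A)$ are invertible. I would then identify each of these three families with the corresponding objects in the semi-infinite setting:

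\textbf{(1)} Invertibility of $\tilde A$ is equivalent to invertibility of $A_+$, since $\tilde A=I\oplus A_+$ and $I$ is trivially invertible.

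\textbf{(2)} For any shift sequence $h_n\to-\infty$, once $|h_n|$ exceeds any fixed cutoff, the matrix entries $\tilde a_{i+h_n,j+h_n}$ with $|i|,|j|\le N$ all come from the identity block; hence every element of $\opsp_l(\tilde A)$ (indeed of $\opsp_-(\tilde A)$) equals $I$ on $\ell^p(\Z)$, and $I_+=I$ is trivially invertible, so this condition is automatic.

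\textbf{(3)} Conversely, for $h_n\to+\infty$ along $r$, the entries $\tilde a_{i+h_n,j+h_n}$ with $i,j$ fixed and $n$ large coincide with the entries $a_{i+h_n,j+h_n}$ of $A_+$. Hence $\opsp_r(\tilde A)=\opsp_r(A_+)$ as sets of bi-infinite band-dominated operators, and the condition becomes exactly invertibility of all $C_-$ with $C\in\opsp_r(A_+)$.

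Combining (1)--(3) yields the equivalence claimed in Theorem \ref{th:FSM+}. The only part requiring more than bookkeeping is point (2), the identification of $\opsp_l(\tilde A)=\{I\}$; once this is established, the choice of the auxiliary sequence $l$ is irrelevant, confirming that the semi-infinite formulation genuinely needs no left-hand data. The main obstacle, then, is not a hard estimate but checking that the limit-operator machinery in $BDO(\ell^p(\Z))$ is robust under the embedding $A_+\mapsto \tilde A$ across all $p\in[1,\infty]$ (including $p=\infty$, where strong convergence of $P_{l,r}$ fails but entry-wise limits remain well-defined, which is exactly what the definition of $\opsp$ uses).
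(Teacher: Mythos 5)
Your argument is correct, but it follows a different route than the paper: the paper gives no separate proof of Theorem \ref{th:FSM+} at all — both Theorem \ref{th:FSM} and its semi-infinite version are presented as generalizations of the stability results in \cite{RaRoSi:FSMsubs,Li:FSMsubs}, ``derived by straightforward changes in the proofs there'' — whereas you deduce Theorem \ref{th:FSM+} as a formal corollary of Theorem \ref{th:FSM} via the extension $\tilde A = I \oplus A_+$ on $\ell^p(\Z)\cong\ell^p(\Z\setminus\N)\oplus\ell^p(\N)$. Your three identifications are sound: $\tilde A$ is invertible iff $A_+$ is; every limit operator of $\tilde A$ along any sequence tending to $-\infty$ is the identity (entries are eventually $\delta_{ij}$ for fixed $i,j$), so the left-hand condition of Theorem \ref{th:FSM} is vacuous and the auxiliary choice $l_n=-n$ is immaterial; and along subsequences of $r$ the shifted entries of $\tilde A$ eventually agree with those of $A_+$, giving $\opsp_r(\tilde A)=\opsp_r(A_+)$, consistent with the paper's convention that limit operators of semi-infinite operators are bi-infinite. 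The transfer of applicability is also fine and worth one explicit sentence in a written version: both the infinite operator and every finite section $P_{l_n,r_n}\tilde A P_{l_n,r_n}$ split as a direct sum with an identity summand, so unique solvability, uniform boundedness of the finite inverses, and componentwise convergence of solutions for all right-hand sides hold for $\tilde A$ iff they hold for $A_+$ (the negative-half component of the truncated solution is just $P_{l_n,0}b$, which is harmless for all $p\in[1,\infty]$). What your approach buys is a self-contained reduction that avoids rerunning the limit-operator stability machinery in the semi-infinite setting; what the paper's citation-based route buys is nothing more than brevity, so your argument is a legitimate and arguably cleaner substitute, provided Theorem \ref{th:FSM} is taken as given.
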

Also here, the set \eqref{eq:limopsFSM+} is smallest possible if
$\{C_- : C\in\opsp_r(A_+)\}$ is a singleton.

%
%\begin{corollary} \label{cor:aFSM+}
%Let $X=\ell^p(\N)$ with $p\in[1,\infty]$, $A_+\in BDO(X)$ and fix a
%monotonously increasing sequence $r=(r_n)_{n\in\N}$ of positive
%integers such that the set $\{D_-:D\in\opsp_r(A_+)\}=:\{C_-\}$ is a
%singleton. Then the finite section method \eqref{eq:Anxn=b}, with
%$l_n=1$ for all $n\in\N$, is applicable if and only if the two
%operators $A_+$ and $C_-$ are invertible.
%\end{corollary}
%
\medskip

{\bf Bi-infinite bi-pseudoergodic systems. } We demonstrate how,
under the sole (and for this purpose minimal -- because necessary)
assumption of invertibility of $A$, one can approximately solve
operator equations $Ax=b$ on $\ell^p(\Z)$ with a bi-pseudoergodic
operator $A$ by the finite section method.

\begin{algorithm} \label{alg:bi} {\bf -- The $\PE_2$-FSM. }
Suppose $U,V,W\subset\C$ are non-empty and compact sets, $p\in
[1,\infty]$, $A\in\PE_2(U,V,W)$ is invertible and $b\in\ell^p(\Z)$
is given.

{\bf Step 1. } Pick some arbitrary $u\in U$, $v\in V$ and $w\in W$.
Choose integer sequences $l_1, l_2, ... $ monotonically decreasing
and $r_1,r_2,...$ monotonically increasing such that $l_n\le r_n$
and
\[
|u_i-u| + |v_i-v| + |w_i-w| < \frac 1n, \qquad \forall i\in\{l_n,
l_n+1,...,l_n+n\}\cup\{r_n-n,...,r_n-1,r_n\}
\]
for $n=1,2,...$ .

{\bf Step 2. } By Theorem \ref{th:Fred}, we know that we are in one
of the three cases (a), (b), (c). To find out which of these cases
applies, compute
\[
|v+2\sqrt{uw}|+|v-2\sqrt{uw}|-2(|u|+|w|)\ \left\{
\begin{array}{ll}
> 0 & \Rightarrow \textrm{ case } (a),\\
< 0 : & \textrm{compute } |u|-|w| \left\{
  \begin{array}{cl}
  > 0 & \Rightarrow  \textrm{ case } (b),\\
  < 0 & \Rightarrow  \textrm{ case } (c).
  \end{array} \right.
\end{array} \right.
\]
If one of the expressions to be computed here is zero or if the
outcome of this algorithm depends on the choice of $u,v,w$ then $A$
is not Fredholm, let alone invertible, by Theorem \ref{th:Fred}.

{\bf Step 3. } Depending on case (a), (b) or (c), we apply our
finite section method (with cut-offs at $(l_n)$ and $(r_n)$ as
chosen in step 1) to different equations:

In case (a), the FSM \eqref{eq:Anxn=b} is applicable to the equation
$Ax=b$, i.e. to
\[
\left(\begin{array}{ccccccc} \ddots&\ddots\\[-2mm]
\ddots&v_{-2}&w_{-2}\\
&u_{-1}&v_{-1}&w_{-1}\\
\hline &&u_{0}&v_0&w_0\\\hline
&&&u_{1}&v_1&w_1\\[-1mm]
&&&&u_2&v_2&\ddots\\
&&&&&\ddots&\ddots
\end{array}\right)
\left(\begin{array}{c} \vdots\\x(-2)\\x(-1)\\x(0)\\x(1)\\x(2)\\
\vdots\end{array}\right) \ =\ \left(\begin{array}{c} \vdots\\b(-2)\\b(-1)\\\hline b(0)\\ \hline b(1)\\b(2)\\
\vdots\end{array}\right).
\]

In case (b), the FSM \eqref{eq:Anxn=b} is applicable to the
following upward-translated (obviously equivalent) system
\[
\left(\begin{array}{ccccccc}
\ddots&\ddots&\ddots\\
&u_{-1}&v_{-1}&w_{-1}\\
\hline &&u_{0}&v_0&w_0\\ \hline
&&&u_{1}&v_1&w_1\\[-1.5mm]
&&&&u_2&v_2&\ddots\\[-1.5mm]
&&&&&u_3&\ddots\\[-1.5mm]
&&&&&&\ddots
\end{array}\right)
\left(\begin{array}{c} \vdots\\x(-2)\\x(-1)\\x(0)\\x(1)\\x(2)\\
\vdots\end{array}\right) \ =\ \left(\begin{array}{c} \vdots\\b(-1)\\ \hline b(0)\\ \hline b(1)\\b(2)\\
b(3)\\\vdots\end{array}\right).
\]

Finally, in case (c), the FSM \eqref{eq:Anxn=b} is applicable to the
following downward-translated (obviously equivalent) system
\[
\left(\begin{array}{ccccccc} \ddots\\[-2mm]
\ddots&w_{-3}\\[-2mm]
\ddots&v_{-2}&w_{-2}\\
&u_{-1}&v_{-1}&w_{-1}\\
\hline &&u_{0}&v_0&w_0\\ \hline
&&&u_{1}&v_1&w_1\\
&&&&\ddots&\ddots&\ddots
\end{array}\right)
\left(\begin{array}{c} \vdots\\x(-2)\\x(-1)\\x(0)\\x(1)\\x(2)\\
\vdots\end{array}\right) \ =\ \left(\begin{array}{c} \vdots\\b(-3)\\b(-2)\\b(-1)\\ \hline b(0)\\ \hline b(1)\\
\vdots\end{array}\right).
\]
\end{algorithm}

\begin{remark} {\bf --\, The growth of the intervals
$\{l_n,...,r_n\}$ as $n\to\infty$.} \label{rem:growth}

{\bf a) } If all entries $u_i$, $v_i$ and $w_i$ of $A$ are
independent samples from three uniformly distributed random
variables with values (everywhere) in $U$, $V$ and $W$ then, for all
choices $u,v,w$ in step 1, one expects the same exponential growth
of $-l_n$ and $r_n$. For example, if $U,V,W$ are finite with
$|U\times V\times W|=m$ then $-l_n$ and $r_n$ are of order $m^n$.
(In the random but not uniformly distributed case, it is certainly
advisable to pick some of the more likely $u,v,w$ in step 1 in order
to minimize the growth of $-l_n$ and $r_n$.)

{\bf b) } The choice of $l_n$ and $r_n$ in step 1 is such that the
conditions of Theorem \ref{th:FSM} and in particular condition
\eqref{eq:singletons} are met with operators $B_+$ and $C_-$ having
constant diagonals (containing $u,v$ and $w$). It is possible to aim
at different (non-Toeplitz) operators $B_+$ and $C_-$ here (as long
as they are invertible) via the choice of $l_n$ and $r_n$, while,
possibly, keeping the growth of $-l_n$ and $r_n$ more moderate.
Steps 2 (with arbitrarily picked $u,v,w$) and 3 still remain as
shown.

{\bf c) } One should not be too worried if these finite systems
become large very quickly since they can be solved in linear time
(as opposed to cubic, for the Gauss algorithm). In case (a) this is
done by the so-called Thomas algorithm \cite{Thomas}, while in case
(b), resp. (c), the solution is calculated successively via
backward, resp. forward, substitution.

{\bf d) } In \cite{RaRoSi:FSM_AP} the finite section method
\eqref{eq:Anxn=b} is adapted to the Almost Mathieu operator
\[
(Ax)_n\ =\ x_{n-1}\ + \lambda \cos\big(2\pi(n\alpha+\theta)\big)\,
x_n\ +\ x_{n+1},\qquad n\in\Z
\]
by putting $-l_n=r_n$ equal to the denominator of the $n$-th
continued fraction approximant of the irrational number $\alpha\in
(0,1)$. Note that this means $-l_n=r_n$ also grow exponentially in
$n$. For example, if $\alpha=(\sqrt 5-1)/2$ is the golden mean then
$-l_n=r_n$ is the $n$-th Fibonacci number. The order of exponential
growth is higher if the continued fraction expansion of $\alpha$
contains larger numbers. (For the golden mean, it is
$1/(1+1/(1+1/\cdots))$.)
\end{remark}
\medskip

{\bf Semi-infinite pseudoergodic systems. } For semi-infinite
systems $A_+x=b$ on $\ell^p(\N)$, the situation is related but much
simpler. Again, we only assume invertibility of the operator.

\begin{algorithm} \label{alg:semi} {\bf -- The $\PE_+$-FSM. }
Suppose $U,V,W\subset\C$ are non-empty and compact sets, $p\in
[1,\infty]$, $A_+\in\PE_+(U,V,W)$ is invertible and $b\in\ell^p(\N)$
is given.

{\bf Step 1. }  Pick some arbitrary $u\in U$, $v\in V$ and $w\in W$.
Choose a monotonically increasing sequence $r_1,r_2,...$ of positive
integers such that
\[
|u_i-u|\ +\ |v_i-v|\ +\ |w_i-w|\ <\ \frac 1n, \qquad \forall
i\in\{r_n-n,...,r_n-1,r_n\}
\]
for $n=1,2,...$ .

{\bf Step 2. } By Theorem \ref{th:Fred} and the invertibility of
$A_+$, we are automatically in case (a).

{\bf Step 3. } The FSM \eqref{eq:Anxn=b} with $l_n=1$ for all
$n\in\N$ and $(r_n)$ as chosen in step 1 applies to our equation
$A_+x=b$, i.e. to
\[
\left(\begin{array}{ccccc}
v_{1}&w_{1}\\
u_{2}&v_{2}&w_{2}\\
&u_{3}&v_3&w_3\\[-2mm]
&&u_{4}&v_4&\ddots\\
&&&\ddots&\ddots
\end{array}\right)
\left(\begin{array}{c} x(1)\\x(2)\\x(3)\\x(4)\\
\vdots\end{array}\right) \ =\ \left(\begin{array}{c}
b(1)\\b(2)\\b(3)\\b(4)\\ \vdots\end{array}\right).
\]
\end{algorithm}

%\begin{remark}
As in Remark \ref{rem:growth} b), note that one could choose
$r=(r_n)$ in step 1 so that $C_-$, with $C\in\opsp_r(A)$, is not of
Toeplitz structure but is another (invertible) operator. For
example, one could choose $r_n$ such that
\[
|u_{r_n-i}-u_{i+2}|\ +\ |v_{r_n-i}-v_{i+1}|\ +\ |w_{r_n-i}-w_{i}|\
<\  \frac 1n, \qquad \forall i\in\{0,...,n\}
\]
for $n=1,2,...$, so that
\begin{equation} \label{eq:reflect}
\left(\begin{array}{cccc} \ddots&\ddots\\[-2mm]
\ddots&v_{r_n-2}&w_{r_n-2}\\
&u_{r_n-1}&v_{r_n-1}&w_{r_n-1}\\
&&u_{r_n}&v_{r_n}
\end{array}\right)\ \to\
\left(\begin{array}{cccc} \ddots&\ddots\\[-2mm]
\ddots&v_{3}&w_{2}\\
&u_{3}&v_{2}&w_{1}\\
&&u_{2}&v_{1}
\end{array}\right)\ =:\ C_-
\end{equation}
strongly as $n\to\infty$. But $C_-$ %, being the reflection of $A_+$,
is invertible by our assumption on $A_+$:
%\end{remark}
%
We call the matrix $C_-$ in \eqref{eq:reflect} the {\sl reflection}
of the operator $A_+$ from \eqref{eq:A} and we will write $A_+^R$
for $C_-$. Conversely, we also call $A_+$ the reflection of $C_-$
and denote it by $C_-^R$. It is easy to see that a semi-infinite
matrix is invertible if and only if its reflection is invertible.

\subsection{Spectral and pseudospectral approximation}
Here we will briefly discuss another feature of the finite section
method with adaptive cut-off intervals. We will work exclusively on
the Hilbert space $H := \ell^2(\Z)$. Let again $l$ and $r$ be sequences
of negative and positive integers which converge to $-\infty$ and $+\infty$,
respectively. The set $\cA_{l,r}$ of all band-dominated operators $A$
on $H$ for which the limit operators $A_l$ and $A_r$ exist is a $C^*$-algebra,
as one easily checks. Note that every band-dominated operator $A$
belongs to an algebra $\cA_{l,r}$ with specified sequences $l,r$.
\begin{proposition} \label{fractal}
Let $A \in \cA_{l,r}$. Then the adaptive finite section
method (\ref{eq:Anxn=b}) with system matrices $P_{l_n,r_n}AP_{l_n,r_n}$
is fractal.
\end{proposition}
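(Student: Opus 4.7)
\noindent The plan is to realise the sequence $A_n := P_{l_n,r_n} A P_{l_n,r_n}$ as an element of a $C^*$-algebra $\cS$ of matrix sequences whose quotient by the ideal $\cG$ of norm-null sequences is described by a family of subsequence-invariant $*$-homomorphisms. A bounded sequence is fractal precisely when such a description is available: every subsequence inherits the same values under these homomorphisms, so that norms, spectra, pseudospectra, singular values and similar spectral quantities automatically stabilise along arbitrary subsequences.

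The relevant homomorphisms, suggested by Theorem \ref{th:FSM}, are
\begin{align*}
W_0(B_n) &:= \textrm{s-}\lim_{n\to\infty} B_n, \\
W_l(B_n) &:= \textrm{s-}\lim_{n\to\infty} V_{-l_n}\, B_n\, V_{l_n}, \\
W_r(B_n) &:= \textrm{s-}\lim_{n\to\infty} V_{-r_n}\, B_n\, V_{r_n},
\end{align*}
where $V_k$ denotes the shift by $k$ on $H$. Evaluated on the generating sequence $(P_{l_n,r_n} A P_{l_n,r_n})$, these strong limits exist by the assumption $A \in \cA_{l,r}$ and produce $A$, $A_l$ and $A_r$ respectively (after pre-/postmultiplication by the projections $P$ and $Q$ to isolate the semi-infinite pieces $(A_l)_+$ and $(A_r)_-$ as in Theorem \ref{th:FSM}). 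Because $\cA_{l,r}$ is a $C^*$-algebra, the formulas above extend to $*$-homomorphisms on the $C^*$-algebra $\cS$ generated by $\{(P_{l_n,r_n} B P_{l_n,r_n}) : B \in \cA_{l,r}\}$ together with $\cG$.

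Next I would verify subsequence invariance. For any strictly increasing $\eta : \N \to \N$, the restriction $R_\eta(B_n) := (B_{\eta(n)})$ maps $\cS$ into an analogous algebra with cut-offs at $l_{\eta(n)}, r_{\eta(n)}$. Since strong limits of sequences are preserved on passing to subsequences, the three homomorphisms $W_0, W_l, W_r$ computed on $R_\eta(B_n)$ return exactly the same operators as they do on $(B_n)$. This is the core fractality property, and at this point it is a transparent consequence of $A \in \cA_{l,r}$.

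The final step is the standard Silbermann-type criterion: a sequence is fractal provided its equivalence class in $\cS/\cG$ is determined by a family of subsequence-invariant $*$-homomorphisms. In our setting this reduces to showing that the joint kernel of $W_0, W_l, W_r$ in $\cS/\cG$ is trivial, i.e.\ that invertibility of the triple $(W_0(B_n), W_l(B_n), W_r(B_n))$ controls stability of $(B_n)$ modulo zero sequences; this is precisely the statement underlying Theorem \ref{th:FSM}. I expect the main obstacle to be the $C^*$-algebraic bookkeeping: one must check that $\cA_{l,r}$ is closed under products and adjoints (so that $W_0, W_l, W_r$ extend to $*$-homomorphisms on all of $\cS$) and that the lifting from Theorem \ref{th:FSM} can be performed uniformly across all $B \in \cA_{l,r}$. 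Once that framework is in place, the subsequence invariance noted above translates immediately into fractality of $(A_n)$ via the standard lifting argument.
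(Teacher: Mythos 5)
Your proposal follows essentially the same route as the paper: stability of $(P_{l_n,r_n}AP_{l_n,r_n})$ is characterized via Theorem \ref{th:FSM} by invertibility of $A$, $B_+$ and $C_-$, these operators arise as strong limits of (shifts of) the sequence and are therefore inherited by every subsequence, and the general fractality criterion (Theorem 1.69 in \cite{HaRoSi2}, your ``standard Silbermann-type criterion'') then yields the claim. The extra $C^*$-algebraic bookkeeping you anticipate is exactly what that cited theorem packages, so your argument and the paper's are the same in substance.
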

The notion of a {\em fractal approximation method} was introduced in
\cite{RoSi1}. Since already its definition makes heavily use of
$C^*$-algebraic language we will omit all technical details here and
refer the interested reader to \cite{RoSi1} and \cite{HaRoSi2}.
Roughly speaking, an algebra of approximation sequences is fractal
if every sequence in the algebra can be reconstructed from each if
its (infinite) subsequences modulo a sequence which tends to zero in
the norm. A single sequence like $(P_{l_n,r_n}AP_{l_n,r_n})_{n \in
\N}$ is called {\em fractal} if the smallest $C^*$-algebra which
contains this sequence and the sequence $(P_{l_n,r_n})_{n \in \N}$
has the fractal property. The proof of Proposition \ref{fractal}
follows easily from Theorem \ref{th:FSM} above and Theorem 1.69 in
\cite{HaRoSi2}. The main point is that the sequence
$(P_{l_n,r_n}AP_{l_n,r_n})_{n \in \N}$ is stable by Theorem
\ref{th:FSM} if and only if the operators $A$, $B_+ = PA_lP$ and
$C_- = QA_rQ$ are invertible and that the operators $A$, $B_+$ and
$C_-$ are strong limits of (shifts of) the sequence
$(P_{l_n,r_n}AP_{l_n,r_n})_{n \in \N}$. Since every subsequence has
the same strong limits, the result follows from Theorem 1.69 in
\cite{HaRoSi2}.

Fractal sequences are distinguished by their excellent convergence
properties. To mention only a few of them, let $\sigma (A)$ denote
the spectrum of an operator $A$, write $\sigma_2(A)$ for the set
of the singular values of $A$, i.e., $\sigma_2(A)$ is the set of
all non-negative square roots of elements in the spectrum of $A^*A$
and finally, for $\varepsilon > 0$, let $\sigma^{(\varepsilon)} (A)$
refer to the $\varepsilon$-pseudospectrum of $A$, i.e. to the set of all
$\lambda \in \C$ for which $A - \lambda I$ is not invertible or
$\|(A - \lambda I)^{-1}\| \ge 1/\varepsilon$. Let further
\[
d_H(M, \, N) := \max \, \{ \max_{m \in M} \min_{n \in N} |m-n|, \,
\max_{n \in N} \min_{m \in M} |m-n| \}
\]
denote the Hausdorff distance between the non-empty compact subsets
$M$ and $N$ of the complex plane.
\begin{theorem} \label{t95.41}
Let $A \in \cA_{l,r}$ and $A_n := P_{l_n,r_n}AP_{l_n,r_n}$. Then the
following sequences converge with respect to the Hausdorff distance
as $n \to \infty \!:$
\begin{itemize}
\item[$(a)$] $\sigma(A_n) \to \sigma (A) \cup \sigma (B_+) \cup
\sigma (C_-)$ if $A$ is self-adjoint;
\item[$(b)$] $\sigma_2 (A_n) \to \sigma_2 (A) \cup \sigma_2 (B_+) \cup
\sigma_2 (C_-)$;
\item[$(c)$] $\sigma^{(\varepsilon)} (A_n) \to \sigma^{(\varepsilon)}
(A) \cup \sigma^{(\varepsilon)} (B_+) \cup \sigma^{(\varepsilon)}
(C_-)$.
\end{itemize}
\end{theorem}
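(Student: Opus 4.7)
The plan is to derive all three parts from the fractality of the sequence $(A_n)_{n \in \N}$ established in Proposition \ref{fractal}, together with the general Hausdorff-convergence theorems for fractal approximation sequences in \cite{HaRoSi2}.

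First I would pin down the strong limits of suitable shifts of $(A_n)$ that control the spectral asymptotics. Since $P_{l_n,r_n} \to I$ strongly and $A$ is bounded, one has $A_n \to A$ strongly. Translating $A_n$ back to the origin by the shift of $-l_n$, the existence of the limit operator $A_l$ (guaranteed by $A \in \cA_{l,r}$) yields strong convergence to $B_+ = P A_l P$; shifting by $-r_n$ analogously produces $C_- = Q A_r Q$. By the band-dominated structure of $A$ and the explicit form $A_n = P_{l_n,r_n} A P_{l_n,r_n}$, any other shift sequence indexed by integers $(k_n)$ with $|k_n| \to \infty$ yields, along subsequences, one of $A$, $B_+$, $C_-$ or the zero operator as strong limit.

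Given these identifications, each item is now a standard consequence of the fractal-sequence machinery of \cite{HaRoSi2}. For (a), in the self-adjoint case the Hausdorff-convergence theorem for spectra of fractal self-adjoint approximation sequences yields $\sigma(A_n) \to \bigcup_L \sigma(L)$, where $L$ ranges over the strong limits above; the zero operator contributes at most the irrelevant point $0$, which is absorbed once one forms the union of the three spectra (and by Theorem \ref{th:FSM} also controls the stability question). For (b), apply the self-adjoint result of (a) to the fractal self-adjoint sequence $(A_n^* A_n)$, whose strong limits are $A^* A$, $B_+^* B_+$ and $C_-^* C_-$; taking non-negative square roots and using $\sigma_2(T) = \sqrt{\sigma(T^* T)}$ yields the Hausdorff convergence of singular values without any self-adjointness hypothesis on $A$. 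For (c), apply the corresponding Hausdorff-convergence theorem for $\varepsilon$-pseudospectra of fractal sequences; fractality ensures that at every $\lambda \in \C$,
\[
\|(A_n - \lambda I)^{-1}\| \to \max \bigl\{ \|(A - \lambda I)^{-1}\|, \, \|(B_+ - \lambda I)^{-1}\|, \, \|(C_- - \lambda I)^{-1}\| \bigr\},
\]
which is exactly what drives Hausdorff convergence of the $\varepsilon$-sublevel sets.

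The main obstacle is a careful bookkeeping step rather than any substantial piece of analysis: one must verify that the set of strong limits arising from all shift sequences of $(A_n)$ is exactly $\{A, B_+, C_-\}$ modulo the trivial zero operator, and that the precise hypotheses of the quoted theorems of \cite{HaRoSi2} apply to our sequence. These verifications are dictated by the definition of $\cA_{l,r}$ and by the geometry of the cut-off intervals $[l_n, r_n]$: a shift whose base eventually lies well inside the interval sees only $A$, a shift tracking $l_n$ sees only the left end of the truncated matrix and yields $B_+$, a shift tracking $r_n$ yields $C_-$, and any shift escaping the interval altogether yields $0$.
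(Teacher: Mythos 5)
Your proposal is correct and follows essentially the paper's own (very short) proof: the result is obtained from the fractality of $(A_n)$ established in Proposition \ref{fractal}, the stability criterion of Theorem \ref{th:FSM}, and the Hausdorff-convergence theorems for fractal approximation sequences in \cite{HaRoSi2} (Theorems 3.20, 3.23 and 3.33 there), with the snapshots identified as $A$, $B_+=PA_lP$ and $C_-=QA_rQ$. One small caution: your remark that a zero-operator strong limit is ``absorbed'' in the union is not literally correct, since $0$ need not belong to $\sigma(A)\cup\sigma(B_+)\cup\sigma(C_-)$; the accurate bookkeeping is that in the \cite{HaRoSi2} framework the $A_n$ are regarded as matrices acting on $\im P_{l_n,r_n}$ and the associated homomorphisms (snapshots) of the sequence algebra are exactly the three operators $A$, $B_+$, $C_-$, so no zero snapshot enters at all.
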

The proof follows immediately from the stability criterion in
Theorem \ref{th:FSM}, from the fractality of the sequence $(A_n)$ by
Proposition \ref{fractal}, and from Theorems 3.20, 3.23 and 3.33
in \cite{HaRoSi2}. Let us emphasize that in general one cannot
remove the assumption $A = A^*$ in assertion $(a)$, whereas $(c)$
holds without any assumption. This observation is only one
reason for the present increasing interest in pseudospectra.
For detailed presentations of pseudospectra and their applications
as well as of other spectral quantities see the monographs
\cite{BGr5,BSi2,HaRoSi2,TrE1} and the references therein.

\subsection{The selfadjoint case} \label{sec:selfadjoint}
We discuss briefly how our results simplify when $A$ is selfadjoint,
i.e. $w_i = \overline{u_{i+1}}$ for all $i$ in \eqref{eq:A}.

In that case, there are only two sets, $U$ and $V$, of which $V$ is
real. Instead of the ellipses $E(u,w)$, one looks at $E(u,\overline
u) = [ -2|u| , 2|u| ]$. The set $E(U,W)$ gets replaced by the union
of $E(u,\overline u)$ over all $u\in U$, which is simply $[ -2u^* ,
2u^* ]$.

In Theorem \ref{th:spec} a), the lower bound therefore becomes $V +
[ -2u^* , 2u^* ]$. But the upper bound becomes the same (recall
Example \ref{ex:spec} a) since $w^*=u^*$ and since all spectra are
real. So
\begin{equation} \label{eq:spec_selfadjoint}
\spec A\ =\ \spess A\ =\ \spess A_+\ =\ \spec A_+\ =\ V + [ -2u^* ,
2u^* ].
\end{equation}
In particular, $A$ is positive definite if and only if $\min_{v\in
V}>2u^*$.

In Theorem \ref{th:Fred} and anywhere else, case (a) applies (all
indices are zero of course). The theorem says that $V$ and $[ -2u^*
, 2u^* ]$ are disjoint if $A$ is Fredholm. From
\eqref{eq:spec_selfadjoint} we know that $V\cap[ -2u^* , 2u^*
]=\varnothing$ is indeed both necessary and sufficient for $A$ to be
Fredholm (i.e. invertible).

Concerning the FSM, not much simplification occurs apart from the
fact that the full FSM is applicable if $A$ is positive or negative
definite.

%%%%%%%%%%%%%%%%%%%%%%%%%%%%%%%%%%%%%%%%%%%%%%%%%%%%%%%%%%%%%%%%%%%%%%%%

\section{Background theory and proofs}
Before we come to the deeper results, let us briefly show how
Fredholmness (and index) of $A$ is related to that of its half-axis
compressions $A_+$ and $A_-$. The following lemma is taken from
\cite{RaRoRoe,RaRoSi:IndexFSM}.

\begin{lemma} \label{lem:Fred}
An operator $A\in BO(X)$ is Fredholm if and only if both its
compressions $A_+$ and $A_-$ are Fredholm, i.e. $\spess A=\spess
A_+\cup\spess A_-$. Moreover, $\ind A=\ind A_++\ind A_-$.
\end{lemma}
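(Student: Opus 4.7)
The plan is to realize $A$ as a block operator with respect to the decomposition $X \cong QX \oplus PX \cong \ell^p(\Z\setminus\N)\oplus\ell^p(\N)$, namely
\[
A\ =\ \begin{pmatrix} A_- & QAP \\ PAQ & A_+ \end{pmatrix},
\]
and to show that the off-diagonal blocks are finite-rank. Indeed, since $A\in BO(X)$ has some finite bandwidth $d$, the entries $a_{ij}$ vanish whenever $|i-j|>d$; therefore the only possibly nonzero entries of $QAP$ sit in positions $(i,j)$ with $i\in\{-d,\ldots,-1\}$ and $j\in\{0,\ldots,d-1\}$, and analogously for $PAQ$. Hence $K:=QAP+PAQ$ is of rank at most $2d^2$, and we may write
\[
A\ =\ \diag(A_-,A_+)\ +\ K
\]
with $K$ compact (in fact finite rank) on $X$.

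With this decomposition in hand, I would invoke two standard facts about Fredholm operators on Banach spaces. First, a block-diagonal operator $\diag(A_-,A_+)$ is Fredholm if and only if each diagonal entry is Fredholm, in which case $\ker$ and $\coker$ split as direct sums, so the indices add:
\[
\ind \diag(A_-,A_+)\ =\ \ind A_-\ +\ \ind A_+.
\]
Second, Fredholmness and the Fredholm index are invariant under compact perturbations. Combining these gives: $A$ is Fredholm iff $\diag(A_-,A_+)$ is Fredholm iff both $A_-$ and $A_+$ are Fredholm, and in that case $\ind A=\ind A_-+\ind A_+$.

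For the essential-spectrum identity, I note that for every $\lambda\in\C$ the operator $A-\lambda I$ lies in $BO(X)$ with the same bandwidth as $A$, its half-axis compressions are $A_\pm-\lambda I$, and its block decomposition has the same finite-rank off-diagonal $K$. Applying the Fredholm characterization just proved to $A-\lambda I$ yields $\lambda\notin\spess A \iff \lambda\notin\spess A_+$ and $\lambda\notin\spess A_-$, i.e.\ $\spess A=\spess A_+\cup\spess A_-$. The only step that requires any care is the finite-rank claim for the off-diagonal blocks, but this is immediate from the band structure; everything else is a routine invocation of Fredholm theory, which is why the result (as noted in the paper) is cited rather than reproved.
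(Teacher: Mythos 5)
Your proposal is correct and follows essentially the same route as the paper: both exploit that the band structure makes $PAQ$ and $QAP$ finite rank, so that $A$ is a compact perturbation of $PAP+QAQ$, i.e.\ of the block-diagonal operator $\diag(A_-,A_+)$, and then transfer Fredholmness and index to the diagonal blocks (and the essential-spectrum identity by applying this to $A-\lambda I$). The only cosmetic difference is that the paper handles the block-diagonal step by factoring $PAP+QAQ=(PAP+Q)(P+QAQ)$ and using multiplicativity of the index, whereas you use the direct-sum splitting of kernel and cokernel -- both are standard and equivalent here.
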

\begin{proof}
Since $PAQ$ and $QAP$ are of finite rank if $A$ is a band operator,
one has that $A=PAP+PAQ+QAP+QAQ$ is equivalent, modulo compact
operators, to
\[
PAP+QAQ\ =\ (PAP+Q)(P+QAQ)\ =\ (P+QAQ)(PAP+Q).
\]
So $A$ is Fredholm if and only if $PAP+Q$ and $P+QAQ$, which are the
extensions (by identity) of $A_+$ and $A_-$ to $\ell^p(\Z)$, are
Fredholm. Moreover,
\begin{eqnarray*}
\ind A&=&\ind(PAP+PAQ+QAP+QAQ)\ =\ \ind(PAP+QAQ)\\ &=&
\ind(PAP+Q)(QAQ+P)\ =\ \ind(PAP+Q)+\ind(QAQ+P)\\ &=& \ind A_++\ind
A_-
\end{eqnarray*}
holds.
\end{proof}

One refers to $\ind A_-$ and $\ind A_+$ as the {\sl minus-} and the
{\sl plus-index} of $A$. By Lemma \ref{lem:Fred}, the problem of
determining Fredholmness (and the index) of $A$ splits into two
subproblems. These two subproblems again split into many smaller
problems, where the key notion is again that of a limit operator.
Besides Lemma \ref{lem:pe_limops} and Theorems \ref{th:FSM} and
\ref{th:FSM+}, limit operators feature in the following
characterization of Fredholmness (including the index):

\begin{theorem} \label{th:limops_Fred}
Let $X=\ell^p(\I)$ with $p\in[1,\infty]$ and
$\I\in\{\Z,\N,\Z\setminus\N\}$, and let $A\in BO(X)$.

{\bf a) } The following are equivalent

\begin{tabular}{rl}
  (i) & $A$ is Fredholm on $X$,\\
 (ii) & all limit operators of $A$ are invertible on $\ell^p(\Z)$ \cite{RaRoSi1998,RaRoSiBook},\\
(iii) & all limit operators of $A$ are injective on
$\ell^\infty(\Z)$ \cite{CWLi2008:FC,CWLi2008:Memoir},
\end{tabular}

so that, by applying the above to $A-\lambda I$ in place of $A$,
\begin{equation} \label{eq:spec_limops}
\spess A\ =\ \bigcup_{B\in\opsp(A)}\spec B\ =\
\bigcup_{B\in\opsp(A)}\sppt B.
\end{equation}

{\bf b) } If $A\in BO(\ell^p(\Z))$ is Fredholm then all operators in
$\opsp_-(A)$ have the same minus-index and all operators in
$\opsp_+(A)$ have the same plus-index, which also happen to be the
minus- and the plus-index of $A$, respectively
\cite{RaRoRoe,RaRoSi:IndexFSM}. This means $\ind A=\ind A_-+\ind
A_+$, where
\begin{eqnarray}
\label{eq:ind-}\ind A_-&=&\ind B_-\qquad\forall\ B\in\opsp_-(A),\\
\label{eq:ind+}\textrm{and}\qquad \ind A_+&=&\ind C_+\qquad\forall\
C\in\opsp_+(A).
\end{eqnarray}
\end{theorem}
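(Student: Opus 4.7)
The theorem is an assembly of several deep results from the literature on band--dominated operators, and the plan is to put the pieces in the right order rather than to rebuild the Fredholm theory from scratch. For part (a), I would prove the chain (iii)$\Rightarrow$(ii)$\Rightarrow$(i)$\Rightarrow$(iii). The implication (ii)$\Rightarrow$(i) is the deep one and (ii)$\Rightarrow$(iii) is trivial, so the real content is (i)$\Rightarrow$(iii) and (iii)$\Rightarrow$(ii). For (i)$\Rightarrow$(iii): if $B = A_h \in \opsp(A)$ and $\varphi\in\ell^\infty(\Z)\setminus\{0\}$ satisfied $B\varphi = 0$, then suitable truncations of the shifted vectors $V_{-h_n}\varphi$ would, by the definition of limit operator and the band--dominated property of $A$, produce an infinite family of near--null vectors of $A$ supported on disjoint windows far apart; extracting a subsequence and perturbing by a compact operator gives infinitely many linearly independent elements in $\ker(A+K)$ for some compact $K$, contradicting $\dim\ker A<\infty$.

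For (iii)$\Rightarrow$(ii) and (ii)$\Rightarrow$(i), the plan is to invoke directly the Fredholm criterion of Rabinovich--Roch--Silbermann \cite{RaRoSi1998,RaRoSiBook} together with its sharpening by Chandler-Wilde--Lindner \cite{CWLi2008:FC,CWLi2008:Memoir}: injectivity on $\ell^\infty$ of every limit operator is already enough to force invertibility of each limit operator on every $\ell^p$ \emph{and} to enforce the uniform boundedness of inverses that is needed to promote pointwise invertibility at infinity to a Fredholm inverse of $A$ modulo compacts. The key conceptual input is the self--similarity $\opsp(B)\subseteq\opsp(A)$ for each $B\in\opsp(A)$, which lets injectivity on $\ell^\infty$ propagate from the limit operators of $A$ to the limit operators of any $B\in\opsp(A)$. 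This self--similarity is where I expect the main technical obstacle to lie in a truly self--contained treatment, but in the present paper it is legitimate to cite it. The spectral identity \eqref{eq:spec_limops} is then obtained by applying the equivalence (i)$\Leftrightarrow$(ii)$\Leftrightarrow$(iii) to $A-\lambda I$ for each $\lambda\in\C$.

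For part (b), the starting point is Lemma \ref{lem:Fred}, which already gives $\ind A=\ind A_++\ind A_-$. What remains is \eqref{eq:ind+}, the statement that every $C\in\opsp_+(A)$ satisfies $\ind C_+=\ind A_+$; \eqref{eq:ind-} is symmetric. Here I would invoke the half--axis index theorem of Rabinovich--Roch--Roe \cite{RaRoRoe,RaRoSi:IndexFSM}: if $C=A_h$ with $h\to+\infty$, then the shifted compressions $P V_{-h_n} A V_{h_n} P$ converge $\ast$--strongly to $C_+$ and, together with their adjoints, keep $A_+$ in a single connected Fredholm component along which the Fredholm index is locally constant. Combining \eqref{eq:ind+}, its analogue \eqref{eq:ind-}, and Lemma \ref{lem:Fred} yields the decomposition $\ind A = \ind A_- + \ind A_+$ and finishes the proof.
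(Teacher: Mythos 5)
The paper itself does not prove Theorem \ref{th:limops_Fred}: it is stated as background, with (i)$\Leftrightarrow$(ii) attributed to \cite{RaRoSi1998,RaRoSiBook}, the $\ell^\infty$-injectivity characterization (iii) to \cite{CWLi2008:FC,CWLi2008:Memoir}, and part (b) to \cite{RaRoRoe,RaRoSi:IndexFSM}. So your plan of assembling the cited results, closing a cycle of implications in (a), applying the equivalence to $A-\lambda I$ to obtain \eqref{eq:spec_limops}, and getting $\ind A=\ind A_++\ind A_-$ from Lemma \ref{lem:Fred}, is exactly in the spirit of what the paper does.

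Two of your supplementary sketches, however, should not be mistaken for proofs. First, (ii)$\Rightarrow$(iii) is not trivial when $p<\infty$: invertibility on $\ell^p(\Z)$ gives injectivity on $\ell^p$, and passing to injectivity on $\ell^\infty$ uses the fact that for band (Wiener algebra) operators invertibility is independent of the underlying $\ell^p$-space \cite{Kurbatov,Li:Wiener,Roch:ellp} --- a separate, nontrivial ingredient that the paper also quotes. Second, and more seriously, your justification of \eqref{eq:ind+} via $*$-strong convergence of the shifted compressions to $C_+$ ``keeping the index constant'' is not a valid mechanism: the Fredholm index is stable under small-norm and compact perturbations, but not under strong or $*$-strong limits, so strong convergence alone proves nothing about indices. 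The equality of the plus-index of $A$ with that of every $C\in\opsp_+(A)$ is precisely the hard content of \cite{RaRoRoe,RaRoSi:IndexFSM}, established there by quite different means; as in the paper, the citation has to carry that weight. Your singular-sequence argument for (i)$\Rightarrow$(iii) can be completed along standard lines, but it needs the usual care: the truncation windows must be chosen so that the edge errors are small relative to the mass of the truncated shifted kernel vector (delicate for $p=1,\infty$), and the contradiction is with $\dim\ker(A+K)<\infty$ for the compactly perturbed, still Fredholm, operator $A+K$.
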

Let $S$ denote the shift operator $(Sx)(m):=x(m-1)$, $m\in\Z$, on
$X=\ell^p(\Z)$. If $A$ is pseudoergodic then $\opsp(A)=M(U,V,W)$, by
Lemma \ref{lem:pe_limops}. Particularly simple elements of
$M(U,V,W)$ are operators whose matrix has constant diagonals. So fix
$u\in U$, $v\in V$ and $w\in W$, and let $L(u,v,w):=uS+vI+wS^{-1}$
be the single element of $M(\{u\},\{v\},\{w\})\subseteq M(U,V,W)$,
which is a so-called {\sl Laurent operator} (sometimes also called
``bi-infinite Toeplitz operator''). It is a standard result
\cite{BGr5,BSi2} that
\begin{equation} \label{eq:spec_Laurent}
\spec L(u,v,w)\ =\ \{ut^1+vt^0+wt^{-1}\,:\,t\in\T\}\ =\ v+E(u,w).
\end{equation}
Together with \eqref{eq:spec_limops}, the latter proves the lower
bound in Theorem \ref{th:spec}. The rather crude (but still helpful)
upper bound in Theorem \ref{th:spec} and statement d) in Theorem
\ref{th:Fred_inv_stab} rely on the following simple lemma and its
corollary.
\begin{lemma} \label{lem:perturb}
Let $A$ be a finite or (semi- or bi-)infinite matrix with
subdiagonal $\uu=(u_i)$, main diagonal $\vv=(v_i)$ and superdiagonal
$\ww=(w_i)$. Put
\begin{equation} \label{eq:defminmaxA}
u^*\,:=\,\sup_i|u_i|,\quad v_*\,:=\,\inf_i|v_i|,\quad
w^*\,:=\,\sup_i|w_i|,\quad \textrm{and}\quad
\delta_A\,:=\,v_*-(u^*+w^*).
\end{equation}
If $\delta_A>0$ then $A$ is invertible and $\|A^{-1}\|\le
1/\delta_A$.
\end{lemma}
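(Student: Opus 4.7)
The plan is to decompose $A = D + R$, where $D$ is the diagonal matrix with entries $v_i$ on the main diagonal and $R := A - D$ is the off-diagonal tridiagonal part carrying the sub- and superdiagonals $\uu$ and $\ww$. Since $|v_i|\ge v_*>0$ (which is forced by $\delta_A>0$), the diagonal operator $D$ is invertible, and entry-wise inversion gives the bound $\|D^{-1}\|\le 1/v_*$ on every $\ell^p$-space under consideration.

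Next I would bound $\|R\|$. Each row and each column of $R$ contains at most two nonzero entries, one of modulus at most $u^*$ and one of modulus at most $w^*$. Hence the maximal absolute row sum and the maximal absolute column sum are both bounded by $u^*+w^*$, which yields $\|R\|_\infty\le u^*+w^*$ and $\|R\|_1\le u^*+w^*$; by Riesz--Thorin interpolation we obtain $\|R\|_p\le u^*+w^*$ for every $p\in[1,\infty]$. (In the finite-dimensional case this is immediate; in the infinite-dimensional case it works uniformly.)

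Now factor $A = D(I + D^{-1}R)$. The composition satisfies
\[
\|D^{-1}R\|\ \le\ \|D^{-1}\|\,\|R\|\ \le\ \frac{u^*+w^*}{v_*}\ =\ 1-\frac{\delta_A}{v_*}\ <\ 1,
\]
so $I+D^{-1}R$ is invertible by the Neumann series with
\[
\|(I+D^{-1}R)^{-1}\|\ \le\ \frac{1}{1-(u^*+w^*)/v_*}\ =\ \frac{v_*}{\delta_A}.
\]
Consequently $A$ is invertible, $A^{-1} = (I+D^{-1}R)^{-1}D^{-1}$, and
\[
\|A^{-1}\|\ \le\ \frac{v_*}{\delta_A}\cdot\frac{1}{v_*}\ =\ \frac{1}{\delta_A},
\]
as claimed.

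There is really no hard step: the argument is a routine diagonal-dominance / Neumann-series estimate. The only point that needs a moment's care is asserting the bound $\|R\|\le u^*+w^*$ uniformly across all $\ell^p$ (and for finite, semi-infinite and bi-infinite matrices), but this follows from the row/column-sum estimates together with interpolation, so no significant obstacle arises.
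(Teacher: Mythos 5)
Your proof is correct and follows essentially the same route as the paper: the factorization $A=D(I+D^{-1}R)$ with $D=\diag(v_i)$, the estimate $\|D^{-1}R\|\le (u^*+w^*)/v_*<1$, and the Neumann series giving $\|A^{-1}\|\le 1/\delta_A$. The only cosmetic difference is that you bound $\|R\|$ via row/column sums and interpolation and then multiply by $\|D^{-1}\|$, whereas the paper bounds $\|D^{-1}T\|$ directly by splitting it into its sub- and superdiagonal (weighted shift) parts; both give the same constant on every $\ell^p$.
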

\begin{proof}
Write $A=D+T$ with $D=\diag(v_i)$ and treat $A$ as a perturbation of
$D$. We have $A=D(I+D^{-1}T)$, where $D^{-1}T$ has subdiagonal
entries $u_iv_i^{-1}$, superdiagonal entries $w_iv_i^{-1}$ and
everything else zero. From $\delta_A>0$ we get that
\[
\|D^{-1}T\|\ \le\ \sup_i|u_iv_i^{-1}|\,+\,\sup_i|w_iv_i^{-1}|\ \le\
\frac{u^*+w^*}{v_*}\ <\ 1,
\]
so that $I+D^{-1}T$ is invertible by Neumann series. But from
$A^{-1}=(I+D^{-1}T)^{-1}D^{-1}$ it follows that also $A$ is
invertible and
\[
\|A^{-1}\|\ \le\ \|(I+D^{-1}T)^{-1}\|\,\|D^{-1}\|\ \le\ \frac
1{1-\|D^{-1}T\|}\,\|D^{-1}\|\ \le\ \frac 1{1-
\frac{u^*+w^*}{v_*}}\,\frac 1{v_*}\ =\ \frac 1{\delta_A},
\]
as was claimed.
\end{proof}

\begin{corollary} \label{cor:M(U,V,W)}
If $U,V,W\subset\C$ are non-empty and compact, \eqref{eq:defminmax}
holds with $\delta>0$, and if $A$ is in $M(U,V,W)$ or $M_+(U,V,W)$
or $M_{\rm fin}(U,V,W)$ then $A$ is invertible and $\|A^{-1}\|\le
1/\delta$.
\end{corollary}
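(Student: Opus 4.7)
The plan is to reduce the corollary directly to Lemma \ref{lem:perturb}. The corollary's hypothesis $\delta > 0$ is formulated in terms of the data sets $U$, $V$, $W$ via $u^* = \max_{u\in U}|u|$, $v_* = \min_{v\in V}|v|$ and $w^* = \max_{w\in W}|w|$, whereas Lemma \ref{lem:perturb} is formulated in terms of the matrix entries of a specific $A$. The bridge is the straightforward observation that any $A \in M(U,V,W) \cup M_+(U,V,W) \cup M_{\rm fin}(U,V,W)$ has entries $u_i \in U$, $v_i \in V$, $w_i \in W$, so that the quantities in \eqref{eq:defminmaxA} obey
\[
\sup_i |u_i| \le u^*, \qquad \inf_i |v_i| \ge v_*, \qquad \sup_i |w_i| \le w^*.
\]

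From these three inequalities the definition \eqref{eq:defminmaxA} gives
\[
\delta_A \;=\; \inf_i|v_i| - \bigl(\sup_i|u_i|+\sup_i|w_i|\bigr) \;\ge\; v_* - (u^*+w^*) \;=\; \delta \;>\; 0.
\]
In particular $\delta_A > 0$, so Lemma \ref{lem:perturb} applies and yields that $A$ is invertible with $\|A^{-1}\| \le 1/\delta_A \le 1/\delta$. This is exactly the assertion.

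There is essentially no obstacle here, since all three of the classes $M(U,V,W)$, $M_+(U,V,W)$ and $M_{\rm fin}(U,V,W)$ are covered uniformly by Lemma \ref{lem:perturb} (which was stated for finite, semi-infinite, or bi-infinite matrices without distinction). The only minor point worth mentioning is that compactness of $U$, $V$, $W$ ensures that $u^*$, $v_*$, $w^*$ are attained and finite, so that $\delta$ is a well-defined real number and the estimate $1/\delta$ makes sense as a uniform bound independent of the particular choice of $A$.
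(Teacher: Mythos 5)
Your reduction is exactly the paper's argument: note that for any $A$ in one of the three classes one has $\delta_A \ge \delta > 0$ (comparing \eqref{eq:defminmaxA} with \eqref{eq:defminmax}) and then invoke Lemma \ref{lem:perturb}, giving $\|A^{-1}\| \le 1/\delta_A \le 1/\delta$. The proposal is correct and matches the paper's proof.
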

\begin{proof}
Just note that $\delta_A\ge\delta$ for $\delta_A$ from
\eqref{eq:defminmaxA} and $\delta$ from \eqref{eq:defminmax} and
apply Lemma \ref{lem:perturb}.
\end{proof}
\medskip

Now we have all the machinery to prove our main results:

\begin{proofof}{Theorem \ref{th:spec}}
{\bf a) } All unions in this proof are taken over the set of all
$B\in M(U,V,W)$. By \eqref{eq:spec_limops} and Lemma
\ref{lem:pe_limops},
\[
\spess A\ =\ \cup\, \sppt B\ =\ \cup\, \spec B\ \supseteq\ \cup\,
\spess B\ \supseteq\ \spess A,
\]
so that equality holds in both ``$\supseteq$'' signs. Moreover,
\[
\spess A\ \subseteq\ \spec A\ \subseteq\ \cup\,\spec B\ =\ \spess A
\]
holds since $A$ is one of the operators $B$ in this union. So again
we have equality everywhere. Equality \eqref{eq:spec_limops} also
holds with $A$ replaced by $A_+$. Hence, by $\opsp(A_+)=M(U,V,W)$,
\[
\spec A\ \supseteq\ \spess A\ \supseteq\ \spess A_+\ =\ \cup\,\spec
B\ \supseteq\ \spec A
\]
holds, which proves the remaining equality in \eqref{eq:specA}. The
lower bound $V+E(U,W)$ in \eqref{eq:specA} now follows by evaluating
$\spec B$ from \eqref{eq:spec_Laurent} for all Laurent operators
$B=L(u,v,w)\in M(U,V,W)$. The upper bound $V+(u^*+w^*)\ovD$ follows
from Corollary \ref{cor:M(U,V,W)} since $A_+-\lambda I_+$ is
invertible if $|v-\lambda|>|u|+|w|$ for all $(u,v,w)\in U\times
V\times W$, i.e. if $\dist(\lambda,V)>u^*+w^*$.

{\bf b) } Let $A\in M(U,V,W)$, $u_*>w^*$ and suppose
$\lambda\in\cap_{v\in V}(v+(u_*-w^*)\D)$. Then
$|v-\lambda|<u_*-w^*\le |u|-|w|$ for all $u\in U$, $v\in V$ and
$w\in W$, so that the subdiagonal of $A-\lambda I$ dominates the
other two diagonals. By a simple perturbation argument as above (see
Lemma \ref{lem:perturb} and Corollary \ref{cor:M(U,V,W)}),
$S^{-1}(A-\lambda I)=\diag(u_i)(I+T)$ with $\|T\|<1$ is invertible,
and hence $A-\lambda I$ is invertible. The argument for the case
$w_*>u^*$ is completely symmetric.
\end{proofof}

\begin{proofof}{Theorem \ref{th:Fred}}
If $A$ is Fredholm then all its limit operators $B$, including the
Laurent operators $B:=L(u,v,w)\in M(U,V,W)$, are invertible. So, for
all $(u,v,w)\in U\times V\times W$, we have that $0\not\in\spec
L(u,v,w)=v+E(u,w)=v-E(u,w)$, i.e. $v\not\in E(u,w)$. The following
three cases are possible:

\begin{tabular}{cll}
(a) & $\wind(E(u,w),v)=0$,& i.e. $v$ is in the exterior of the ellipse $E(u,w)$,\quad or\\
(b) & $\wind(E(u,w),v)=1$, & i.e. $v$ is encircled counter-clockwise by $E(u,w)$,\quad or\\
(c) & $\wind(E(u,w),v)=-1$, & i.e. $v$ is encircled clockwise by $E(u,w)$,\\
\end{tabular}

\noindent where $\wind(C,z)$ denotes the winding number of a closed
oriented curve $C$ w.r.t. a point $z\not\in C$ and where the ellipse
$E(u,w)$ is parametrized (and thereby oriented) by the map
$\ph\mapsto ue^{\ri\ph}+we^{-\ri\ph}$ from $[0,2\pi)$ to $E(u,w)$. A
simple computation shows that $E(u,w)$ is oriented counter-clockwise
if $|u|>|w|$ and clockwise if $|u|<|w|$ (while the ellipse
degenerates into a line segment if $|u|=|w|$). Let $\varrho$ denote
the rotation $z\mapsto \frac v2 - z$ of the complex plane around
$\frac v2$. For the Toeplitz operator $B_+$, one has (see e.g.
\cite{BGr5,BSi2})
\begin{eqnarray*}
\ind B_+ &=& -\wind(\spec B,0)\ = \ -\wind(v+E(u,w),0)\ =\
-\wind(\varrho(v+E(u,w)),\varrho(0))\\
&=& \textstyle -\wind(-\frac v2 - E(u,w),\frac v2)\ =\
-\wind(-E(u,w),v) \ =\ -\wind(E(u,w),v),
%\\ &=& \left\{\begin{array}{rl}0&\textrm{in case (a)},\\-1&\textrm{in case
%(b)},\\1&\textrm{in case (c).}\end{array}\right.
\end{eqnarray*}
which is $0$ in case (a), $-1$ in case (b) and $1$ in case (c). By
\eqref{eq:ind+}, we have $\ind B_+=\ind A_+$ for all
$B\in\opsp_+(A)=M(U,V,W)$, so that for all choices $(u,v,w)\in
U\times V\times W$, the same case, (a), (b) or (c), applies --
according to $\ind A_+$.
\end{proofof}

\begin{proofof}{Theorem \ref{th:Fred_inv_stab}}
{\bf a) } The only implication that is not obvious here is that $A$
is invertible if the full FSM of $A$ is stable. This can be found in
\cite{RoSi} (also see \cite{RaRoSiBook,Li:Book}).

{\bf b) } Now let $A\in\PE(U,V,W)$. Properties $(i),\,(ii),\,(vi)$
and $(vii)$ are equivalent because $(i)$ implies $(vii)$ by Theorem
\ref{th:limops_Fred} a) and Lemma \ref{lem:pe_limops}. It remains to
show that $(iii)$ implies $(v)$. By Theorem \ref{th:FSM} with
$l=(-1,-2,...)$ and $r=(1,2,...)$, property $(iii)$ is equivalent to
invertibility of $A$ and all operators $B_+$ and $C_-$ with
$B\in\opsp_-(A)$ and $C\in\opsp_+(A)$. W.l.o.g suppose $A$ is
right-pseudoergodic, so that $\opsp_+(A)=M(U,V,W)$ by Lemma
\ref{lem:pe_limops} and hence $C_-$ is invertible for all $C\in
M(U,V,W)$. But then, for every $B\in M(U,V,W)$, also $B_+$ is
invertible because its reflection $B_+^R$ is of the form $C_-$ for
some $C\in M(U,V,W)$ and is therefore invertible. Finally, since
every operator in $M(U,V,W)$ is invertible if $A$ is invertible (see
above), we conclude $(v)$, by Theorem \ref{th:FSM} again.

{\bf c) } Now let $A\in\PE(U,V,W)$ and $0\in U,W$. To see that all
properties $(i)$--$(vii)$ are equivalent, it is sufficient to show
that $(i)$ implies $(iv)$. So let $A$ be Fredholm. By Theorem
\ref{th:limops_Fred} a) we know that all limit operators $B$ of $A$,
which are all operators in $M(U,V,W)$ by Lemma \ref{lem:pe_limops},
are invertible. Moreover, there is a $c>0$ (e.g. the norm of a
Fredholm regularizer of $A$, see \cite{RaRoSi1998}) such that
$\|B^{-1}\|\le c$ for all these operators $B$. Now we can show
$(iv)$: If $D\in M(U,V,W)$ then $B:=D\in M(U,V,W)$ is invertible and
$\|D^{-1}\|\le c$. If $D_+\in M_+(U,V,W)$ then $B:=\Diag(D_+^R,D_+)$
is in $M(U,V,W)$ since $0\in U,W$ and hence $B$ is invertible, so
that $D_+$ is invertible and $\|(D_+)^{-1}\|=\|B^{-1}\|\le c$.
Finally, if $D\in M_{\rm fin}(U,V,W)$ then, since $0\in U,W$,
$B:=\Diag(\cdots,D,D,D,\cdots)\in M(U,V,W)$ is invertible, so that
$D$ is invertible and $\|D^{-1}\|= \|B^{-1}\|\le c$.

{\bf d) } If $\delta>0$ then, by Corollary \ref{cor:M(U,V,W)},
property $(iv)$ holds and hence all the others follow.
\end{proofof}

\begin{proofof}{correctness of Algorithm \ref{alg:bi}}
The choice of the sequences $l=(l_n)$ and $r=(r_n)$ in step 1 is
such that the sets $\{D_+:D\in\opsp_l(A)\}=:\{B_+\}$ and
$\{D_-:D\in\opsp_r(A)\}=:\{C_-\}$ are singletons; in fact, $B_+$ and
$C_-$ are Toeplitz operators with diagonals $u$, $v$, $w$. It is
possible to choose sequences $l$ and $r$ with these properties
because $A\in\PE_2(U,V,W)$.

The test in step 2 exactly follows the geometric definition
\eqref{eq:ellipse} of the ellipse $E(u,w)$: If
$|v+2\sqrt{uw}|+|v-2\sqrt{uw}|>2(|u|+|w|)$ then $v\in E_-(u,w)$ and
we are in case (a) of Theorem \ref{th:Fred} (also see the proof of
Theorem \ref{th:Fred}). If
$|v+2\sqrt{uw}|+|v-2\sqrt{uw}|<2(|u|+|w|)$ then $v\in E_+(u,w)$ and
it remains to check the orientation of the ellipse. For $|u|>|w|$,
the ellipse is counter-clockwise oriented, so that case (b) applies,
and for $|u|<|w|$ the orientation is clockwise and we are in case
(c). By Theorem \ref{th:Fred}, the outcome of this test does not
depend on the values $u\in U$, $v\in V$, $w\in W$ chosen in step 1
if $A$ is Fredholm. The resulting case corresponds to $\ind A_+$.

If we are in case (a) then $\ind A_+=0$. Because $A$ is invertible,
we have $0=\ind A=\ind A_++\ind A_-=\ind A_-$. Now let
$D\in\opsp_r(A)$. By Theorem \ref{th:limops_Fred} a) and b), $D$ is
invertible and $\ind D_+=\ind A_+=0$, so that $0=\ind D=\ind
D_++\ind D_-=\ind A_++\ind C_-=\ind C_-$. So $C_-$ is a Toeplitz
operator that is Fredholm with index $0$. By Coburn's theorem
\cite{BGr5,BSi2}, $C_-$ is invertible. By a completely symmetric
argument (or simply by noting that $B_+=C_-^R$) we get that also
$B_+$ is invertible. Since \eqref{eq:singletons} holds, Theorem
\ref{th:FSM} yields the applicability of the FSM \eqref{eq:Anxn=b}
with the sequences $l$ and $r$ as chosen in step 1.

If we are in case (b) or (c) then $k:=\ind A_+=\mp 1$ and there are
no sequences $l$ and $r$ for which the FSM \eqref{eq:Anxn=b} could
be applicable to $Ax=b$. However, the FSM \eqref{eq:Anxn=b} is
applicable to the equivalent system $S^kAx=S^kb$ by the same
arguments as in case (a) since $S^kA$ is invertible and since
$\ind(S^kA)_+=\ind S^k_+A_+=\ind S^k_++\ind A_+=-k+k=0$.
\end{proofof}

\begin{proofof}{correctness of Algorithm \ref{alg:semi}}
The choice of the sequence $r=(r_n)$ in step 1 is such that the set
$\{D_-:D\in\opsp_r(A)\}=:\{C_-\}$ is a singleton; in fact, $C_-$ is
a Toeplitz operator with diagonals $u$, $v$, $w$. It is possible to
choose such a sequence $r$ because $A_+\in\PE_+(U,V,W)$.

Since $\ind A_+=0$ by invertibility of $A_+$, we are automatically
in case (a) of Theorem \ref{th:Fred}. Let $D\in\opsp_r(A)$. By
Theorem \ref{th:limops_Fred} a) and b), $D$ is invertible and $\ind
D_+=\ind A_+=0$, so that $0=\ind D=\ind D_++\ind D_-=\ind A_++\ind
C_-=\ind C_-$. So $C_-$ is a Toeplitz operator that is Fredholm with
index $0$. By Coburn's theorem \cite{BGr5,BSi2}, $C_-$ is
invertible. Now Theorem \ref{th:FSM+} yields the applicability of
the FSM \eqref{eq:Anxn=b} with the sequence $r$ as chosen in step 1.
\end{proofof}
\medskip

\section{A numerical example}
We illustrate our results by a numerical computation, for which we
come back to the Hatano-Nelson model from Example \ref{ex:spec} b).
So let $U=\{e^g\}$ and $W=\{e^{-g}\}$ with $g>0$, put
$c:=e^g+e^{-g}=2\cosh g$ and $s:=e^g-e^{-g}=2\sinh g$, and let
$V=[-a,a]$ with $0<a<s<c$.

Now let $A\in\PE(U,V,W)$. From Theorem \ref{th:spec} and our
discussion in Example \ref{ex:spec} b), we derive the upper and
lower bounds on $\spec A$ as shown in Figure \ref{fig:HatNel}. For
further studies of this operator, including the size and shape of
the hole in its spectrum, see
\cite{Davies2001:SpecNSA,Davies2001:PseudoErg,Davies2005:HigherNumRanges,MartinezThesis,MartinezHN}.

\noindent
\begin{center}
\includegraphics[width=\textwidth]{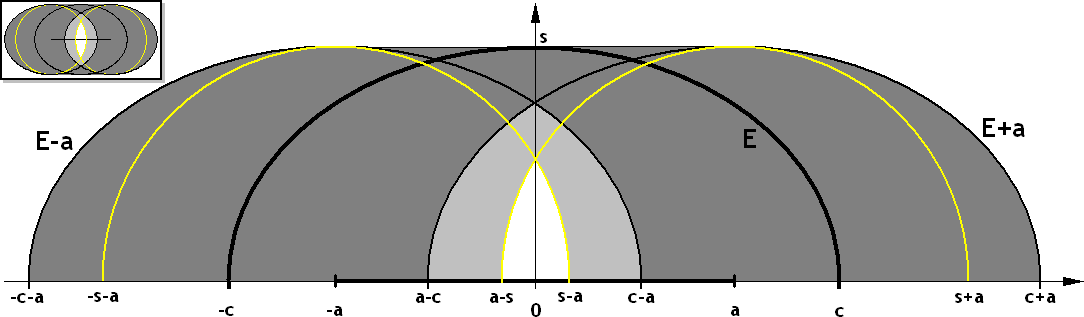}
\end{center}
~\\[-18mm]  % Sorry, dirty layout pimping...
\begin{figure}[h]
\caption{\footnotesize Here are our lower (dark gray) and upper
(dark+light gray) bound on $\spec A$ from Example \ref{ex:spec} b)
with $g=1$ and $a=2$, so that $0<a<s<c$. The region of uncertainty
(light gray) is small if $s$ and $c$ are close to each other (i.e.
if $g$ is large). For reasons of symmetry we have only shown the
upper half of the complex plane.} \label{fig:HatNel}
\end{figure}

While the spectrum of a $n$-by-$n$ principal submatrix of $A$ is
less interesting (each such matrix is similar to a self-adjoint
matrix), the spectrum of this finite problem with periodic boundary
conditions and its limit as $n\to\infty$ has been described in much
detail by Goldsheid and Khoruzhenko \cite{GoldKoru}, who thereby
verified numerical observations of Hatano and Nelson
\cite{HatanoNelson96,HatanoNelson97,HatanoNelson98}. The limiting
set as $n\to\infty$ turns out to be the union of certain analytic
curves (the so-called `bubble with wings' \cite{TrefContEmb}) and is
entirely different from (although contained in) the spectrum of the
infinite matrix $A$.

We will now apply our adaptive FSM (Algorithm \ref{alg:bi}) to a
concrete matrix $A\in M(U,V,W)$, whose main diagonal entries $v_i$
have been chosen independently from $V=[-a,a]$, where the density of
our probability distribution on $V$ increases in a certain way
towards the endpoints of the interval. Our model has the parameters
$g=1$ (so that $c=2\cosh 1\approx 3.0862$ and $s=2\sinh 1\approx
2.3504$) and $a=2<s$, whence $0\not\in\spec A$.
%, i.e. every operator in $M(U,V,W)$ is invertible.

In step 1 of the algorithm, we choose, as motivated in Remark
\ref{rem:growth} a), $v=2$, which is one of the values with the
highest probability density, besides the obvious choices $u=e^1$ and
$w=e^{-1}$. Then, for $n=1,2,...$, we look for $n$ consecutive
entries of the main diagonal that are within $1/n$ of $v=2$ to find
our cut-off bounds $l_n<l_{n-1}$ and $r_n>r_{n-1}$ (see Figure
\ref{fig:v}) with $l_0=0=r_0$.

\noindent
\begin{center}
\includegraphics[width=\textwidth]{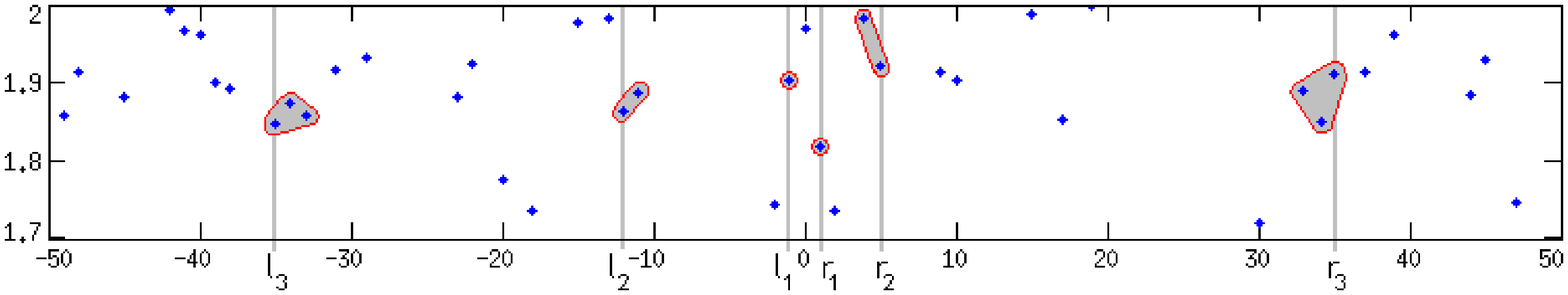}
\end{center}
~\\[-18mm]  % Sorry, dirty layout pimping...
\begin{figure}[h]
\caption{\footnotesize These are the main diagonal entries $v_{-50}$
to $v_{50}$ that are close to $2$. Encircled are the groups of
$n=1,2,3$ consecutive entries that are within $1/n$ of $v=2$ and
therefore lead to the definition of $l_n$ and $r_n$.} \label{fig:v}
\end{figure}

In step 2 of the algorithm, we find that we are in case (b), which
says that $v$ lies inside the ellipse $E=E(u,w)$ and it is encircled
counter-clockwise w.r.t. the parametrization $\ph\mapsto
ue^{\ri\ph}+we^{-\ri\ph}=e^{1+\ri\ph}+e^{-1-\ri\ph}$ of $E$. In
other words: $\ind A_+=-1$.

This means that, in step 3, we shift our infinite system up by one
row before we truncate it according to our sequences $l=(l_n)$ and
$r=(r_n)$. The resulting method is applicable if and only if the
inverses of the finite matrices $A_n:=P_{l_n,r_n}S^{-1}AP_{l_n,r_n}$
remain uniformly bounded as $n\to\infty$. The following table shows
the cut-off sequences $l=(l_n)$ and $r=(r_n)$, the size of the
matrices $A_n$ and the norms of their inverses for $n=1,2,...,8$.
\[
\begin{array}{|r||r|r|r|r|}
\hline n&l_n&r_n&r_n-l_n+1&\|A_n^{-1}\|\\
\hline 1&-1&1&3&0.6816\\
2&-12&5&18&1.0580\\
3&-35&35&71&1.2698\\
4&-41&162&204&1.2698\\
5&-899&537&1437&1.4121\\
6&-1068&1183&2252&1.5438\\
7&-20494&21758&42253&1.6135\\
8&-469241&41570&510811&1.7500\\ \hline
\end{array}
\]
We see the rather irregular exponential growth of the intervals
$\{l_n,...,r_n\}$ (see Remark \ref{rem:growth}) and the moderate
growth of the inverses $A_n^{-1}$. This numerical evidence is not
really convincing that the inverses remain uniformly bounded as
$n\to\infty$. However, from the theory behind our Theorem
\ref{th:FSM} it follows that $\limsup_n \|A_n^{-1}\|$ is in case
$p=2$ equal (and otherwise at least bounded above by two times) the
maximum of the norms of the inverses of the operators in
\eqref{eq:limopsFSM}.
%
%\noindent \rule{\textwidth}{2pt}\\
%{\tt Hallo Steffen, stimmt der letzte Satz? Gleichheit? (Auch f\"ur
%$p\ne 2$?)\\
%Wenn ja, sollten wir das vielleicht in Theorem \ref{th:FSM} gleich
%mit sagen.}\\
%\rule{\textwidth}{2pt}
%
In our case, this means that
\[
\limsup_n \|A_n^{-1}\|\ =\ \max(\,\|A^{-1}\|\,,\,\|B_+^{-1}\|\,)
\]
if $p=2$, where $B_+$ is the Toeplitz operator (note the translation
$S^{-1}$ in step 3)
\[
B_+\ =\ \left(\begin{array}{cccc} e^1&2&e^{-1}\\
&e^1&2&\ddots\\
&&e^1&\ddots\\
&&&\ddots
\end{array}\right)
\]
with symbol $a(t)=e^1+2t^{-1}+e^{-1}t^{-2}$, $t\in\T$. But from
$\|A^{-1}\|\le (s-2)^{-1}\approx 2.8539$ (the argument is as in the
proof of Lemma \ref{lem:perturb}) and
$\|B_+^{-1}\|=(\min_{t\in\T}|a(t)|)^{-1}=(c-2)^{-1}\approx 0.9207$
(note that $B_+$ is upper-triangular, whence its inverse is the
Toeplitz operator with symbol $a(t)^{-1}$), we get that $\limsup_n
\|A_n^{-1}\|$ is bounded above by $(s-2)^{-1}\approx 2.8539$. In
general, it takes very large random matrices $A_n$ to see
$\|A_n^{-1}\|$ come close to $\sup_n \|A_n^{-1}\|$ because this
requires a particular (and usually long) pattern somewhere on the
diagonal(s) of $A_n$. The latter is reminiscent of the finite but
very long time it takes a monkey to type the complete works of
Shakespeare \cite{Monkey}.
\medskip

{\bf Acknowledgements. } The first author acknowledges the financial
support by Marie-Curie Grants MEIF-CT-2005-009758 and
PERG02-GA-2007-224761 of the EU.

\bigskip

\noindent {\bf Authors:}\\[4mm]
Marko Lindner\hfill \href{mailto:marko.lindner@mathematik.tu-chemnitz.de}{{\tt marko.lindner@mathematik.tu-chemnitz.de}}\\
TU Chemnitz\\
Fakult\"at Mathematik\\
D-09107 Chemnitz\\
GERMANY\\[8mm]
Steffen Roch\hfill \href{mailto:roch@mathematik.tu-darmstadt.de}{{\tt roch@mathematik.tu-darmstadt.de}}\\
TU Darmstadt\\
Fachbereich Mathematik\\
Schlossgartenstr. 7\\
D-64289 Darmstadt\\
GERMANY

\end{document}